\documentclass[12p]{amsart}
\usepackage{amssymb}
\usepackage{amsmath}
\usepackage{amsfonts}
\usepackage{geometry}
\usepackage{graphicx}
\usepackage{mathrsfs,amssymb}

\usepackage{hyperref}
\usepackage{cleveref}



\theoremstyle{plain}

\newtheorem{definition}{Definition}

\newtheorem{lemma}{Lemma}

\newtheorem{remark}{Remark}

\newtheorem{theorem}{Theorem}
\numberwithin{equation}{section}

\begin{document}
\title[]{The $L^{2}$ sequential convergence of a solution to the mass-critical NLS above the ground state}

\author{Benjamin Dodson}

\begin{abstract}
In this paper we generalize a weak sequential result of \cite{fan20182} to a non-scattering solutions in dimension $d \geq 2$. No symmetry assumptions are required for the initial data. We build on a previous result of \cite{dodson20202} for one dimension.

\end{abstract}
\maketitle

\section{Introduction}
The mass-critical nonlinear Schr{\"o}dinger equation (NLS) is given by
\begin{equation}\label{1.1}
i u_{t} + \Delta u = \mu |u|^{\frac{4}{d}} u = \mu F(u), \qquad u(0,x) = u_{0}, \qquad u : I \times \mathbb{R}^{d} \rightarrow \mathbb{C}, \qquad \mu = \pm 1,
\end{equation}
where $I \subset \mathbb{R}$ is an open interval with $0 \in I$. The case when $\mu = +1$ is the defocusing case, and the case when $\mu = -1$ is the focusing case.

If $u$ solves $(\ref{1.1})$, then for any $\lambda > 0$,
\begin{equation}\label{1.2}
\lambda^{d/2} u(\lambda^{2} t, \lambda x),
\end{equation}
also solves $(\ref{1.1})$ with initial data $\lambda^{d/2} u_{0}(\lambda x)$. The $L^{2}$ norm, or mass, is preserved under $(\ref{1.2})$. Thus, $(\ref{1.1})$ is called $L^{2}$ or mass critical. The $L^{2}$ norm, or mass, is also conserved by the flow of $(\ref{1.1})$. If $u$ is a solution to $(\ref{1.1})$ on some interval $I \subset \mathbb{R}$, $0 \in I$, then for any $t \in I$,
\begin{equation}\label{1.3}
M(u(t)) = \int |u(t,x)|^{2} dx = \int |u(0,x)|^{2} dx.
\end{equation}

It is well-known that the local well-posedness of $(\ref{1.1})$ is completely determined by $L^{2}$-regularity. In the positive direction,  \cite{cazenave1989some}, \cite{cazenave1990cauchy} proved that $(\ref{1.1})$ is locally well-posed on some open interval for initial data $u_{0} \in L^{2}(\mathbb{R}^{d})$. Furthermore, if $u_{0} \in H_{x}^{s}(\mathbb{R}^{d})$ for some $s > 0$,  \cite{cazenave1989some}, \cite{cazenave1990cauchy} proved that $(\ref{1.1})$ was locally well-posed on an open interval $(-T, T)$, where $T(\| u_{0} \|_{H^{s}}) > 0$ depends only on the size of the initial data. Finally,  \cite{cazenave1989some}, \cite{cazenave1990cauchy} proved that there exists $\epsilon_{0} > 0$ such that if $\| u_{0} \|_{L^{2}} < \epsilon_{0}$, then $(\ref{1.1})$ is globally well-posed and scattering.

\begin{definition}[Scattering]\label{d1.1}
A solution to $(\ref{1.1})$ that is global forward in time, that is $u$ exists on $[0, \infty)$, is said to scatter forward in time if there exists $u_{+} \in L^{2}(\mathbb{R}^{d})$ such that
\begin{equation}\label{1.4}
\lim_{t \nearrow \infty} \| u(t) - e^{it \Delta} u_{+} \|_{L^{2}(\mathbb{R}^{d})} = 0.
\end{equation}
A solution to $(\ref{1.1})$ that is global backward in time is said to scatter backward in time if there exists $u_{-} \in L^{2}(\mathbb{R}^{d})$ such that
\begin{equation}\label{1.5}
\lim_{t \searrow -\infty} \| u(t) - e^{it \Delta} u_{-} \|_{L^{2}(\mathbb{R}^{d})} = 0.
\end{equation}
Equation $(\ref{1.1})$ is scattering for any $u_{0} \in L^{2}(\mathbb{R}^{d})$, or for $u_{0}$ in a specified subset of $L^{2}(\mathbb{R}^{d})$, if for any $u_{0} \in L^{2}(\mathbb{R}^{d})$ or the specified subset of $L^{2}(\mathbb{R}^{d})$, there exist $(u_{-}, u_{+}) \in L^{2}(\mathbb{R}^{d}) \times L^{2}(\mathbb{R}^{d})$ such that $(\ref{1.4})$ and $(\ref{1.5})$ hold, and additionally, $u_{-}$ and $u_{+}$ depend continuously on $u_{0}$.
\end{definition}
\noindent In the negative direction, \cite{christ2003asymptotics} showed that local well-posedness fails for $u_{0} \in H^{s}$, $s < 0$.

The qualitative global behavior for $(\ref{1.1})$ in the defocusing case $(\mu = +1)$ has now been completely worked out. A solution to $(\ref{1.1})$ has the conserved quantities mass, $(\ref{1.3})$, energy,
\begin{equation}\label{1.7}
E(u(t)) = \frac{1}{2} \int |u_{x}(t,x)|^{2} dx + \frac{\mu d}{2d + 4} \int |u(t,x)|^{\frac{2d + 4}{d}} dx = E(u(0)),
\end{equation}
and momentum
\begin{equation}\label{1.8}
P(u(t)) = Im \int \nabla u(t,x) \overline{u(t,x)} dx = P(u(0)).
\end{equation}
When $\mu = +1$, $(\ref{1.7})$ is positive definite, so if $u_{0} \in H^{1}(\mathbb{R}^{d})$, then the energy gives an upper bound on $\| u(t) \|_{H^{1}}$ for any $t \in I$. Since $(\ref{1.1})$ is locally well-posed on an interval $[-T, T]$, where $T(\| u_{0} \|_{H^{1}}) > 0$, conservation of energy implies that the local well-posedness result of \cite{cazenave1989some}, \cite{cazenave1990cauchy} can be iterated to a global well-posedness result. Later, $(\ref{1.1})$ was proved to be globally well-posed and scattering for any initial data in $u_{0} \in L^{2}(\mathbb{R}^{d})$ when $\mu = +1$, see \cite{dodson2016global2}, \cite{dodson2016global}, and \cite{dodson2012global}.

In the focusing case $(\mu = -1)$, the existence of non-scattering solutions to $(\ref{1.1})$ has been known for a long time, see \cite{glassey1977blowing}. Let $Q(x)$ be the unique, positive, radial solution of the elliptic partial differential equation
\begin{equation}\label{1.10}
\Delta Q + |Q|^{\frac{4}{d}} Q = Q.
\end{equation}
Such a solution is known to exist, see \cite{kwong1989uniqueness}. If $Q$ solves $(\ref{1.10})$, then $e^{it} Q(x)$ gives a global solution to $(\ref{1.1})$ when $\mu = -1$,
\begin{equation}\label{1.10.1}
i u_{t} + \Delta u = -|u|^{\frac{4}{d}} u, \qquad u(0,x) = u_{0}, \qquad u : I \times \mathbb{R}^{d} \rightarrow \mathbb{C},
\end{equation}
which does not scatter in either time direction. Furthermore, if $u(t,x)$ is a solution to $(\ref{1.10.1})$, then applying the pseudoconformal transformation to $u$,
\begin{equation}\label{1.11}
v(t,x) = \frac{1}{|t|^{d/2}} \bar{u}(\frac{1}{t}, \frac{x}{t}) e^{i \frac{|x|^{2}}{4t}},
\end{equation}
is also a solution to $(\ref{1.10.1})$. Applying the pseudoconformal transformation to $e^{it} Q(x)$ gives a solution to $(\ref{1.10.1})$ that blows up in finite time.

Furthermore, the mass $\| Q \|_{L^{2}}$ represents a blowup threshold. In the case when $\| u_{0} \|_{L^{2}} < \| Q \|_{L^{2}}$ and $u_{0} \in H^{1}$, \cite{weinstein1983nonlinear} proved that $(\ref{1.10.1})$ has a global solution using conservation of mass, energy, and the Gagliardo--Nirenberg inequality,
\begin{equation}\label{1.9}
\| f \|_{L^{2 + \frac{4}{d}}(\mathbb{R}^{d})}^{2 + \frac{4}{d}} \leq \frac{d + 2}{d} (\frac{\| f \|_{L^{2}(\mathbb{R}^{d})}}{\| Q \|_{L^{2}(\mathbb{R}^{d})}})^{\frac{4}{d}} \| \nabla f \|_{L^{2}(\mathbb{R}^{d})}^{2}.
\end{equation}
Plugging $(\ref{1.9})$ into $(\ref{1.7})$ when $\mu = -1$,
\begin{equation}
E(u(t)) \geq \frac{1}{2} \| \nabla u(t) \|_{L^{2}}^{2} (1 - \frac{\| u_{0} \|_{L^{2}}^{4/d}}{\| Q \|_{L^{2}}^{4/d}}).
\end{equation}
For initial data $u_{0} \in L^{2}$ satisfying $\| u_{0} \|_{L^{2}} < \| Q \|_{L^{2}}$, where $u_{0}$ need not lie in $H^{1}$, \cite{dodson2015global} proved global well-posedness and scattering.

Less is known about the focusing problem when $\| u_{0} \|_{L^{2}} = \| Q \|_{L^{2}}$. It is conjectured that $u(t,x) = e^{it} Q(x)$ and its pseudoconformal transformation are the only non-scattering solutions to $(\ref{1.10.1})$ when $\| u_{0} \|_{L^{2}} = \| Q \|_{L^{2}}$, modulo symmetries of $(\ref{1.1})$. The symmetries of $(\ref{1.1})$ include the scaling symmetry, which has already been discussed $(\ref{1.2})$, translation in space and time,
\begin{equation}\label{1.13}
u(t - t_{0}, x - x_{0}), \qquad t_{0} \in \mathbb{R}, \qquad x_{0} \in \mathbb{R}^{d},
\end{equation}
phase transformation,
\begin{equation}\label{1.14}
\forall \theta_{0} \in \mathbb{R}, \qquad e^{i \theta_{0}} u(t, x),
\end{equation}
and the Galilean transformation,
\begin{equation}\label{1.15}
e^{i \frac{\xi_{0}}{2} \cdot (x - \frac{\xi_{0}}{2} t)} u(t, x - \xi_{0} t), \qquad \xi_{0} \in \mathbb{R}^{d}.
\end{equation}

This conjecture was answered in the affirmative in all dimensions for finite time blowup solutions with finite energy initial data. See \cite{merle1992uniqueness} and \cite{merle1993determination}. This conjecture was also answered in the affirmative for a radially symmetric solution to $(\ref{1.10.1})$ in dimensions $d \geq 4$ that blow up in both time directions, but not necessarily in finite time. See \cite{killip2009characterization}.
\begin{remark}
Throughout this paper, blowup refers to failure to scatter, and could mean either finite or in infinite time, unless specified otherwise. From \cite{cazenave1989some}, \cite{cazenave1990cauchy}, failure to scatter forward in time is equivalent to
\begin{equation}
\| u \|_{L_{t,x}^{\frac{2(d + 2)}{d}}([0, \sup(I)) \times \mathbb{R}^{d})} = \infty,
\end{equation}
where $I$ is the maximal interval of existence of $u$.
\end{remark}

\begin{remark}
The pseudoconformal transformation of the solution $e^{it} Q(x)$ is a solution that blows up in one time direction but scatters in the other. By time reversal symmetry, it is possible to assume without loss of generality that the solution blows up forward in time. So \cite{merle1992uniqueness} and \cite{merle1993determination} proved that a finite energy, finite time blowup solution to $(\ref{1.10.1})$ must be a pseudoconformal transformation of $e^{it} Q(x)$. Meanwhile, \cite{killip2009characterization} showed that the only radial solution to $(\ref{1.10.1})$ that blows up in both time directions in dimensions $d \geq 4$ is the soliton $e^{it} Q$.
\end{remark}

More recently, \cite{fan20182} proved a sequential convergence result for radially symmetric solutions that may only blow up in one time direction.

\begin{theorem}\label{t1.1}
Assume that $u$ is a radial solution to the focusing, mass-critical nonlinear Schr{\"o}dinger equation, $(\ref{1.10.1})$, with $\| u_{0} \|_{L^{2}} = \| Q \|_{L^{2}}$, which does not scatter forward in time. Let $(T^{-}(u), T^{+}(u))$ be its lifespan, $T^{-}(u)$ could be $-\infty$ and $T^{+}(u)$ could be $+\infty$. Then there exists a sequence $t_{n} \nearrow T^{+}(u)$ and a family of parameters $\lambda_{\ast, n}$, $\gamma_{\ast, n}$ such that
\begin{equation}\label{1.16}
\lambda_{\ast, n}^{d/2} u(t_{n}, \lambda_{\ast, n} x) e^{-i \gamma_{\ast, n}} \rightarrow Q, \qquad \text{in} \qquad L^{2}.
\end{equation}
\end{theorem}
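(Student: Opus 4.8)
The plan is to run a concentration--compactness / profile decomposition argument adapted to a solution that fails to scatter only forward in time. The starting point is the following dichotomy coming from the variational characterization of $Q$ via the Gagliardo--Nirenberg inequality $(\ref{1.9})$: since $\|u_0\|_{L^2} = \|Q\|_{L^2}$, the energy is nonnegative, $E(u) \ge 0$, with equality forcing (modulo symmetries) $u(t)$ to be a soliton. So one should first dispose of the case $E(u) = 0$ (and also the case of a solution with vanishing momentum on a sequence, reducing by a Galilean boost), where the rigidity of the sharp Gagliardo--Nirenberg inequality directly produces $(\ref{1.16})$ with no time sequence needed. The substantive case is $E(u) > 0$, $\|u_0\|_{L^2} = \|Q\|_{L^2}$, $u$ not scattering forward in time.

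In that case, I would exploit the failure of scattering: by the remark after Definition~\ref{d1.1}, $\|u\|_{L^{2(d+2)/d}_{t,x}([0,T^+)\times\mathbb R^d)} = \infty$. Choosing a sequence $t_n \nearrow T^+$ along which suitable space-time norms of $u$ on $[t_n, T^+)$ remain infinite, I apply the linear profile decomposition (Bahouri--Gérard / Merle--Vega / Keraani type, valid in $L^2$ for the mass-critical Strichartz estimate in $d\ge 2$) to the bounded sequence $u(t_n)$ in $L^2(\mathbb R^d)$. One obtains $u(t_n) = \sum_{j=1}^J e^{it_n^j \Delta}\,[\,\text{translated, scaled, modulated, boosted}\,]\,\phi^j + R_n^J$ with asymptotic orthogonality of the parameters and Pythagorean decomposition of the mass: $\|Q\|_{L^2}^2 = \sum_j \|\phi^j\|_{L^2}^2 + \lim_n \|R_n^J\|_{L^2}^2$. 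Now I invoke the scattering theorem of \cite{dodson2015global} (sub-threshold mass) together with the stability/perturbation theory for $(\ref{1.10.1})$: if more than one profile carried positive mass, each would have mass strictly below $\|Q\|_{L^2}$, hence would generate a global scattering nonlinear solution, and the standard nonlinear profile superposition argument would contradict the non-scattering of $u$ forward in time. Therefore exactly one profile survives, it carries the full mass $\|Q\|_{L^2}$, and the remainder goes to zero in $L^2$; moreover the surviving profile's nonlinear evolution cannot scatter forward, so by sub-threshold scattering its nonlinear solution must itself have mass $\ge \|Q\|_{L^2}$, pinning it at exactly $\|Q\|_{L^2}$.

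It remains to identify that single profile with $Q$. After undoing the symmetry parameters we have, up to a subsequence, $\lambda_n^{d/2} u(t_n, \lambda_n \cdot)\, e^{-i\gamma_n}\, e^{i x\cdot \xi_n} \rightharpoonup \psi$ weakly in $L^2$ with $\|\psi\|_{L^2} = \|Q\|_{L^2}$; weak convergence plus equality of norms upgrades to strong $L^2$ convergence. The nonlinear solution with data $\psi$ has mass exactly $\|Q\|_{L^2}$ and does not scatter forward in time. Here I use the one-dimensional precedent \cite{dodson20202} and, more to the point, the minimal-mass blowup rigidity: a solution with mass equal to $\|Q\|_{L^2}$ that does not scatter in one direction must, after further symmetry reduction, satisfy the compactness (almost periodicity) of the orbit $\{u(t)\}$ near $T^+$, and then the variational argument forces $E=0$ for that limiting object, hence it is $e^{it}Q$ up to symmetries. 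Pulling this back, one can arrange the Galilean parameter $\xi_n \to 0$ (using conservation of momentum together with $\|u_0\|_{L^2}=\|Q\|_{L^2}$ to control the frequency center, exactly as in the $d=1$ argument), so the boost disappears in the limit and $(\ref{1.16})$ holds with just $\lambda_{\ast,n}$ and $\gamma_{\ast,n}$.

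The main obstacle, and the place where genuine work beyond citation is needed, is the non-radial setting: handling the Galilean parameter. In \cite{fan20182} radial symmetry kills the translation and the boost outright; without it, one must show the frequency centers $\xi_n$ of the surviving profile stay bounded and can be normalized away, which is where the momentum conservation law $(\ref{1.8})$ and the threshold identity $\|u_0\|_{L^2}=\|Q\|_{L^2}$ are essential — this is precisely the ingredient imported from \cite{dodson20202} and extended to $d\ge 2$. A secondary technical point is ensuring the perturbation theory for $(\ref{1.10.1})$ (focusing, mass-critical) is robust enough in $L^2$ with $d\ge 2$ to run the profile-superposition contradiction at the threshold mass, for which one leans on the scattering norm bounds from \cite{dodson2015global}.
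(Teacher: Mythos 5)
Your reduction in the first two paragraphs matches Section 2 of the paper: by the sub-threshold scattering result of \cite{dodson2015global}, a non-scattering solution at the threshold mass is a minimal-mass blowup solution, the profile decomposition collapses to a single profile carrying the full mass $\| Q \|_{L^{2}}$, and the reduction of \cite{tao2008minimal} makes the surviving nonlinear profile almost periodic modulo symmetries. (For the radial statement actually at issue here the translation and Galilean parameters are absent from the outset, so your ``main obstacle'' paragraph concerns the non-radial Theorem \ref{t1.2} rather than Theorem \ref{t1.1}.)

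The genuine gap is in your third paragraph, at the sentence ``the variational argument forces $E=0$ for that limiting object, hence it is $e^{it}Q$ up to symmetries.'' This is precisely the statement that has to be proved, and nothing in your argument produces it. The almost periodic solution is only in $L^{2}$, so its energy need not even be defined; moreover the energy of the renormalized solution $\lambda(t)^{-d/2} e^{ix\cdot\xi(t)}v(t, \cdot/\lambda(t))$ is not a conserved quantity and there is no a priori reason it should vanish along any sequence of times. The sharp Gagliardo--Nirenberg inequality only becomes useful once one has \emph{already} exhibited a sequence of times, spatial centers, frequency truncations, and local Galilean boosts along which a suitably localized energy tends to zero while the localized mass tends to $\| Q \|_{L^{2}}$ and the localized $L^{2+4/d}$ norm stays bounded below. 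Manufacturing that sequence is the entire analytic content of the proof: an interaction Morawetz functional with the tensor-product weight $(\ref{3.3})$, the frequency truncation $I = P_{\leq T}$, the choice of $\xi(s)$ killing the local momentum as in $(\ref{3.10})$, the localization of the potential term away from $x(t)$ via the compactness modulus $(\ref{3.13.1})$, and, for non-constant $\lambda(t)$, the smoothing algorithm for $\tilde{\lambda}(t)$ together with the exclusion of the rapid-cascade scenario (Theorem \ref{t4.1}). None of this machinery, nor any substitute for it, appears in your proposal, so the argument does not close.
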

In fact, \cite{fan20182} proved Theorem $\ref{t1.1}$ for a larger class of initial data, data which is symmetric across $d$ linearly independent hyperplanes. In one dimension, there is no difference between radial initial data and symmetric initial data, but there is in higher dimensions.

In a previous paper, \cite{dodson20202}, we removed the symmetry assumption in dimension one. Here, we continue this study and remove the symmetry assumption in dimensions $d \geq 2$. In doing so, we must allow for translation, $(\ref{1.13})$, and Galilean symmetries, $(\ref{1.15})$, not just scaling and phase transformation symmetries.
\begin{theorem}\label{t1.2}
Assume $u$ is solution to $(\ref{1.1})$ with $\| u_{0} \|_{L^{2}} = \| Q \|_{L^{2}}$ which does not scatter forward in time. Let $(T^{-}(u), T^{+}(u))$ be its lifespan, $T^{-}(u)$ could be $-\infty$ and $T^{+}(u)$ could be $+\infty$. Then there exists a sequence $t_{n} \nearrow T^{+}(u)$ and a family of parameters $\lambda_{\ast, n}$, $\gamma_{\ast, n}$, $\xi_{\ast, n}$, $x_{\ast, n}$ such that
\begin{equation}\label{1.17}
\lambda_{\ast, n}^{d/2} e^{ix \xi_{\ast, n}} u(t_{n}, \lambda_{\ast, n} x + x_{\ast, n}) e^{-i \gamma_{\ast, n}} \rightarrow Q, \qquad \text{in} \qquad L^{2}.
\end{equation}
\end{theorem}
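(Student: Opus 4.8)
The plan is to run the concentration-compactness machinery adapted to the $L^2$-critical problem at the ground-state mass threshold, following the template of \cite{fan20182} and \cite{dodson20202} but now keeping track of the full symmetry group. First I would record the variational characterization of $Q$: by the sharp Gagliardo–Nirenberg inequality $(\ref{1.9})$, among $H^1$ functions with $\|f\|_{L^2}=\|Q\|_{L^2}$ the energy $E$ is minimized (value zero) precisely by the two-parameter family $e^{i\gamma}\lambda^{d/2}Q(\lambda\cdot)$ modulo translation and Galilean boost; more usefully, a quantitative (stability) version says that if $\|f\|_{L^2}=\|Q\|_{L^2}$ and $E(f)$ is small and $\|\nabla f\|_{L^2}$ is bounded, then $f$ is $L^2$-close to the orbit of $Q$ under $(\ref{1.13})$, $(\ref{1.14})$, $(\ref{1.15})$, and scaling. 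The burden is therefore to produce a sequence $t_n \nearrow T^+(u)$ along which, after rescaling by the natural frequency scale $\lambda_{\ast,n}$, the rescaled solutions have uniformly bounded kinetic energy and energy tending to zero.

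The second step is the Palais–Smale / compactness-up-to-symmetries statement. Because $u$ fails to scatter forward in time and $\|u_0\|_{L^2} = \|Q\|_{L^2}$, the $L^2$-critical profile decomposition (Bourgain, Carles–Keraani, Bégout–Vargas, etc.) applied to any sequence $u(t_n)$ with $t_n \nearrow T^+(u)$ yields a single profile carrying the full mass and with vanishing remainder in $L^2$: here is where the translation parameters $x_{\ast,n}$ and the Galilean/frequency parameters $\xi_{\ast,n}$ enter, since in $d\geq 2$ without symmetry the profile need not be centered at the origin in either physical or frequency space. This is the step that \cite{dodson20202} carried out in $d=1$; the extension to $d\geq 2$ is essentially formal once the profile decomposition with all four parameters is in hand, the point being that mass concentration along the blow-up sequence forces exactly one bubble. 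Thus along a subsequence
\[
\lambda_{\ast,n}^{d/2} e^{ix\xi_{\ast,n}} u(t_n,\lambda_{\ast,n} x + x_{\ast,n}) e^{-i\gamma_{\ast,n}} \rightharpoonup \phi \quad \text{weakly in } L^2,
\]
with $\|\phi\|_{L^2} = \|Q\|_{L^2}$, hence the convergence is in fact strong in $L^2$.

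The third, and genuinely substantive, step is to identify $\phi = Q$ (after possibly adjusting parameters). This is where the dynamics, not just compactness, must be used: one shows the rescaled solutions $v_n(s,y) := \lambda_{\ast,n}^{d/2} e^{iy\xi_{\ast,n}} u(t_n + \lambda_{\ast,n}^2 s, \lambda_{\ast,n} y + x_{\ast,n} + \cdots) e^{-i\gamma_{\ast,n}}$, which are again solutions of $(\ref{1.1})$ of the same mass, converge (on compact time intervals) to the solution with data $\phi$; the no-scattering hypothesis propagated through the profile decomposition forces that limit solution to be global-and-nonscattering with threshold mass. One then invokes the rigidity already established in the literature — the finite-energy blow-up classification of \cite{merle1992uniqueness}, \cite{merle1993determination} together with the variational fact that a threshold-mass solution which is compact modulo symmetries must have zero energy — to conclude $\phi$ is on the $Q$-orbit. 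The main obstacle is precisely this rigidity/identification step: controlling the Galilean parameters $\xi_{\ast,n}$ (they can run to infinity, and a priori the kinetic energy of $v_n$ need not be bounded until one subtracts the boost), and ruling out the pseudoconformal-blow-up profile as the limit along the chosen sequence — which is handled, as in \cite{fan20182}, by choosing $t_n$ adapted to the frequency-scale function so that the compactness is realized at bounded kinetic energy. Everything else (local theory, conservation laws, the profile decomposition) is quoted from \cite{cazenave1989some}, \cite{cazenave1990cauchy}, \cite{dodson2015global}, and the earlier concentration-compactness literature.
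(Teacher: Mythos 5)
Your first two steps track the paper: the reduction via the $L^{2}$-profile decomposition to a single full-mass profile $\phi$ with vanishing remainder, the passage to the almost periodic nonlinear profile $v$ (Theorem \ref{t2.1} and the appeal to \cite{tao2008minimal}), and the variational endgame in which $\|f\|_{L^{2}} = \|Q\|_{L^{2}}$, $E(f) = 0$ forces $f$ onto the $Q$-orbit. But your third step --- the one you correctly flag as ``genuinely substantive'' --- is where the proposal has a real gap rather than a sketch. You propose to ``invoke the rigidity already established in the literature,'' namely \cite{merle1992uniqueness}, \cite{merle1993determination} plus a ``variational fact that a threshold-mass solution which is compact modulo symmetries must have zero energy.'' Neither is available here. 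The Merle classification applies only to finite-energy, finite-time blowup solutions; the profile $\phi$ is merely in $L^{2}$ and the almost periodic solution $v$ need not blow up in finite time. And the ``variational fact'' you quote is not a fact in the literature for non-symmetric data in $d \geq 2$ --- an asymptotic version of it is precisely the theorem being proved. The only rigidity of this type that exists, \cite{killip2009characterization}, requires radial symmetry and $d \geq 4$, and removing such symmetry assumptions is the entire point of the paper.

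There is also a structural error: you apply the profile decomposition to an arbitrary sequence $t_{n} \nearrow T^{+}(u)$ and then try to show the resulting limit $\phi$ is $Q$. That statement is false in general --- the limit can be any element of the precompact modulated orbit $K$ in $(\ref{2.8})$. The theorem only asserts the \emph{existence} of a good sequence, and that sequence must be \emph{selected} by a monotonicity argument. What the paper actually does (Sections 3--5) is construct a frequency-truncated, doubly spatially localized, Galilean-modulated interaction Morawetz functional $M(t)$ as in $(\ref{3.1})$ and $(\ref{4.19})$: the choice of $\xi(s)$ killing the local momentum $(\ref{3.10})$, the Merle--Tsutsumi-type control of $|u|^{2+4/d}$ away from $x(t)$, and (for general $\lambda(t)$) the smoothing algorithm for $\tilde{\lambda}(t)$ together with the exclusion of the rapid-cascade scenario (Theorem \ref{t4.1}) combine to show that the time-average of the localized energy $E(\chi(\frac{x - x(t)}{R}) v_{s})$ is $o(T)$. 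Only then does one extract times $t_{n}$ with $E \to 0$ and mass non-evacuation, feeding into the variational step. None of this mechanism appears in your proposal, so the sequence $t_{n}$ whose existence the theorem asserts is never actually produced.
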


When $\| u_{0} \|_{L^{2}} > \| Q \|_{L^{2}}$, one can easily construct solutions to $(\ref{1.1})$ that blow up in finite time. Indeed, using the virial identity for a solution to $(\ref{1.1})$,
\begin{equation}\label{1.15.0}
\frac{d^{2}}{dt^{2}} \int |x|^{2} |u(t,x)|^{2} dx = 16 E(u_{0}),
\end{equation}
for $u_{0} \in H^{1}$, $\| |x| u_{0} \|_{L^{2}} < \infty$, $E(u_{0}) < 0$, $(\ref{1.15.0})$ implies that the variance $\int |x|^{2} |u(t,x)|^{2} dx$ is a concave function in time. Therefore, the variance can only be positive on some finite interval $(-T_{1}, T_{2})$, where $T_{1}$, $T_{2} < \infty$, which implies that the solution to $(\ref{1.1})$ with such initial data cannot exist outside the time interval $(-T_{1}, T_{2})$. Initial data $u_{0} = (1 + \epsilon) Q$ satisfies the above conditions for any $\epsilon > 0$.

For initial data with nonpositive energy and mass slightly above the ground state
\begin{equation}\label{1.18}
\| Q \|_{L^{2}} < \| u_{0} \|_{L^{2}} \leq \| Q \|_{L^{2}} + \alpha, \qquad \text{for some} \qquad \alpha > 0 \qquad \text{small},
\end{equation}
\cite{merle2006sharp} proved that after acting on the solution with the appropriate symmetries, $u(t,x)$ converges weakly to $Q$ as $t$ converges to the blowup time. Such solutions would include the above mentioned solutions with finite variance and negative energy that satisfy $(\ref{1.18})$.

This fact also holds for any solution to $(\ref{1.1})$ that satisfies $(\ref{1.18})$ and fails to scatter. Once again, we generalize a result of \cite{fan20182} to the non-symmetric case in dimensions $d \geq 2$.
\begin{theorem}\label{t1.3}
Assume $u$ is a solution to $(\ref{1.1})$ with $u_{0}$ satisfying $(\ref{1.18})$, which does not scatter forward in time. Let $(T^{-}(u), T^{+}(u))$ be the lifespan of the solution. Then there exists a sequence of times $t_{n} \nearrow T^{+}(u)$ and a family of parameters $\lambda_{\ast, n}$, $\gamma_{\ast, n}$, $\xi_{\ast, n}$, $x_{\ast, n}$ such that
\begin{equation}\label{1.19}
\lambda_{\ast, n}^{d/2} e^{ix \cdot \xi_{\ast, n}} u(t_{n}, \lambda_{\ast, n} x + x_{\ast, n}) e^{-i \gamma_{\ast, n}} \rightharpoonup Q, \qquad \text{weakly in} \qquad L^{2}.
\end{equation}
\end{theorem}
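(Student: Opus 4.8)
\emph{Sketch of the proof.} The argument is a concentration--compactness reduction followed by a variational identification of the extracted bubble with $Q$; the genuinely new feature in dimension $d\geq2$ without symmetry is that the bubble must be renormalized not only in scale and phase but also in position and frequency, which forces a momentum normalization into the final virial / Gagliardo--Nirenberg step.

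First I would extract a bubble. Fix any $t_{n}\nearrow T^{+}(u)$; by mass conservation $\|u(t_{n})\|_{L^{2}}=\|u_{0}\|_{L^{2}}$, and we may apply the $L^{2}$ linear profile decomposition for the mass-critical evolution (Carles--Keraani, B{\'e}gout--Vargas, in the form carrying the Galilean frequency parameters) to $\{u(t_{n})\}$:
\[
u(t_{n})=\sum_{j=1}^{J}g_{n}^{j}\,e^{i\tau_{n}^{j}\Delta}\phi^{j}+w_{n}^{J},
\]
with the $g_{n}^{j}$ built from the symmetries $(\ref{1.2})$, $(\ref{1.13})$, $(\ref{1.14})$, $(\ref{1.15})$, asymptotically orthogonal parameters, mass decoupling, and $\limsup_{J}\limsup_{n}\|e^{it\Delta}w_{n}^{J}\|_{L^{2(d+2)/d}_{t,x}}=0$. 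Passing to the nonlinear profiles and invoking the stability theory of $(\ref{1.1})$: if every nonlinear profile scattered forward in time and the remainder had small spacetime norm, then $u$ would scatter forward, contrary to hypothesis; hence some nonlinear profile $U^{1}$ does not scatter forward, and by Dodson's scattering theorem below the ground state \cite{dodson2015global} its mass obeys $\|\phi^{1}\|_{L^{2}}\geq\|Q\|_{L^{2}}$. Combining with mass decoupling and $(\ref{1.18})$,
\[
\|Q\|_{L^{2}}^{2}\le\|\phi^{1}\|_{L^{2}}^{2}\le\|u_{0}\|_{L^{2}}^{2}\le\|Q\|_{L^{2}}^{2}+2\alpha\|Q\|_{L^{2}}+\alpha^{2},
\]
so for $\alpha$ small enough the remaining profiles and the remainder carry mass below the small-data threshold of \cite{cazenave1989some}, \cite{cazenave1990cauchy} and therefore scatter; thus $U^{1}$ alone carries the failure to scatter. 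After the standard treatment of the time shift $\tau_{n}^{1}$ (the case $\tau_{n}^{1}\to+\infty$ is excluded since it would make $U^{1}$ scatter forward; a finite shift is absorbed into $t_{n}$; a shift $\to-\infty$ makes $U^{1}$ blow up forward in infinite time, which is harmless below) and renaming $\lambda_{n}^{1},x_{n}^{1},\xi_{n}^{1},\gamma_{n}^{1}$ as $\lambda_{\ast,n},x_{\ast,n},\xi_{\ast,n},\gamma_{\ast,n}$, we arrive at
\[
v_{n}:=\lambda_{\ast,n}^{d/2}e^{ix\cdot\xi_{\ast,n}}u\!\left(t_{n},\lambda_{\ast,n}x+x_{\ast,n}\right)e^{-i\gamma_{\ast,n}}\;\rightharpoonup\;V:=\phi^{1}\quad\text{in }L^{2},
\]
where $V$ generates a solution of $(\ref{1.1})$ that does not scatter forward in time and $\|Q\|_{L^{2}}\le\|V\|_{L^{2}}\le\|Q\|_{L^{2}}+\alpha$.

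It then remains to show $V=Q$ up to symmetries; granting this, a diagonal argument composing the parameters above with those furnished for the $V$-solution produces $(\ref{1.19})$. Iterating the previous paragraph on the $V$-solution and passing to a bubble of least mass --- the masses being bounded below by $\|Q\|_{L^{2}}$ --- I reduce to a nonscattering solution $W$ that is almost periodic modulo the symmetry group, with $\|Q\|_{L^{2}}\le\|W\|_{L^{2}}\le\|Q\|_{L^{2}}+\alpha$. If $\|W\|_{L^{2}}=\|Q\|_{L^{2}}$, then Theorem \ref{t1.2} applies to $W$ and yields a time sequence along which its renormalization converges in $L^{2}$, hence weakly, to $Q$. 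If $\|W\|_{L^{2}}\in(\|Q\|_{L^{2}},\|Q\|_{L^{2}}+\alpha]$, the precompactness of its orbit upgrades $W$ to an $H^{1}$ solution of finite energy (additional regularity of almost periodic solutions); I would then use the Galilean parameter to normalize its momentum to zero, rule out the soliton-like (bounded frequency-scale) regime by a localized virial identity when $\alpha$ is small, and in the remaining, frequency-concentrating regime renormalize the $\dot{H}^{1}$ norm so that, along $t_{n}\nearrow T^{+}(W)$, the renormalized solution has mass in $[\|Q\|_{L^{2}},\|Q\|_{L^{2}}+\alpha]$, vanishing momentum, and energy $\to 0$. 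Feeding these into $(\ref{1.9})$ forces near-saturation of the sharp Gagliardo--Nirenberg inequality; since near-extremizers with normalized $\dot{H}^{1}$ norm and vanishing momentum are precompact in $H^{1}$ modulo translation and phase with only $Q$ in the limit, the weak $L^{2}$ limit along a further time sequence is $Q$.

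The main obstacle is this last step when $\|W\|_{L^{2}}>\|Q\|_{L^{2}}$. In \cite{fan20182} and \cite{dodson20202} the symmetry of the data (or $d=1$) keeps the solution spatially centred, so the virial and Gagliardo--Nirenberg computations need not see translations or boosts; here one must track the spatial centre $x_{\ast,n}$ and the mean frequency $\xi_{\ast,n}$ simultaneously and show that, after subtracting the mean momentum, the solution cannot persist in a travelling, spread-out configuration. Controlling the boundary terms in the localized virial identity under this double normalization --- equivalently, ruling out Galilean-boosted almost periodic solutions of mass just above $\|Q\|_{L^{2}}$ --- is the technical heart of the $d\geq2$ non-symmetric case.
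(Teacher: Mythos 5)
Your reduction is on track through the first half: extracting a single non-scattering, almost periodic profile $W$ with $\| Q \|_{L^{2}} \leq \| W \|_{L^{2}} \leq \| Q \|_{L^{2}} + \alpha$ is exactly the paper's use of Lemma 4.2 of \cite{fan20182}, and disposing of the case $\| W \|_{L^{2}} = \| Q \|_{L^{2}}$ via Theorem \ref{t1.2} matches the paper. The gap is in the case $\| W \|_{L^{2}} > \| Q \|_{L^{2}}$, which you yourself flag as ``the technical heart'' and then do not supply. Two specific steps fail as stated. First, ``the precompactness of its orbit upgrades $W$ to an $H^{1}$ solution'' is not available: a general almost periodic solution need not lie in $H^{1}$ (additional regularity is only known in special scenarios such as the rapid cascade), so your localized virial identity and your momentum normalization have no a priori meaning. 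The paper circumvents this entirely by working with the frequency-truncated solution $Iu = P_{\leq T} u$ inside an interaction Morawetz functional; it is this Morawetz estimate (Sections 3--5), not a virial identity, that simultaneously handles the soliton-like and concentrating regimes and produces, along a sequence of times, a localized, renormalized object whose energy tends to zero and whose limit $u_{0}$ is an honest $H^{1}$ function with $E(u_{0}) = 0$ and $\| u_{0} \|_{L^{2}} \geq \| Q \|_{L^{2}}$. Your proposal has no substitute mechanism for producing this vanishing energy: you simply assert it in the ``frequency-concentrating regime'' after an unproved virial exclusion of the bounded-frequency regime.

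Second, even granting a zero-energy limit datum of mass strictly above $\| Q \|_{L^{2}}$, your plan to identify the weak limit by a Gagliardo--Nirenberg near-extremizer argument applied directly to $W$ is not how the paper closes, and it skips the ingredient that actually does the work: the Merle--Rapha{\"e}l theorem (Theorem \ref{t5.2}), which says that the zero-energy, above-threshold solution with data $u_{0}$ blows up in finite time by the log-log law with $u = \lambda(t)^{-d/2}(Q + \epsilon)((x - x(t))/\lambda(t)) e^{i\gamma(t)}$ and $\nabla \epsilon \rightarrow 0$ in $L^{2}$, whence weak $L^{2}$ convergence to $Q$. The paper then transfers this back to the original solution by a perturbative diagonal argument, equations $(\ref{5.8})$--$(\ref{5.11})$; your sketch gestures at ``composing the parameters'' but this transfer step, which requires stability theory applied on the rescaled time intervals $[t_{n}, t_{n} + t_{n}'/\lambda(t_{n})^{2}]$, also needs to be written out. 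In short: the profile-decomposition reduction is right, but the proposal is missing the interaction Morawetz estimate that generates the zero-energy limit and the Merle--Rapha{\"e}l input that identifies it, and the steps offered in their place (the $H^{1}$ upgrade and the virial exclusion) do not hold as stated.
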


\section{A Preliminary reduction}
The scattering result of \cite{dodson2015global} implies that a non-scattering solution to $(\ref{1.1})$ with $\| u_{0} \|_{L^{2}} = \| Q \|_{L^{2}}$ is a minimal mass blowup solution to $(\ref{1.1})$. Therefore, it is possible to make a reduction to a to an almost periodic solution in proving Theorem $\ref{t1.2}$. Let $t_{n} \nearrow T^{+}(u)$ be a sequence of times. Making a profile decomposition, after passing to a subsequence, for all $J$,
\begin{equation}\label{2.1}
u(t_{n}) = \sum_{j = 1}^{J} g_{n}^{j} [e^{i t_{n}^{j} \Delta} \phi^{j}] + w_{n}^{J},
\end{equation}
where $g_{n}^{j}$ is the group action
\begin{equation}\label{2.1.1}
g_{n}^{j} \phi^{j} = \lambda_{n, j}^{d/2} e^{ix \cdot \xi_{n,j}} e^{i \gamma_{n,j}} \phi^{j}(\lambda_{n,j} x + x_{n,j}),
\end{equation}
and
\begin{equation}
\lim_{J \rightarrow \infty} \limsup_{n \rightarrow \infty} \| e^{it \Delta} w_{n}^{J} \|_{L_{t,x}^{\frac{2(d + 2)}{d}}(\mathbb{R} \times \mathbb{R}^{d})} = 0.
\end{equation}
Since $u$ is a minimal mass blowup solution, $\phi^{j} = 0$ for $j \geq 2$, $\| \phi^{1} \|_{L^{2}} = \| Q \|_{L^{2}}$, and $\| w_{n}^{J} \|_{L^{2}} \rightarrow 0$ as $n \rightarrow \infty$. See \cite{dodson2019defocusing}, \cite{killip2013nonlinear}, or \cite{tao2008minimal} for a detailed treatment of the profile decomposition for minimal mass blowup solutions. Thus, it will be convenient to drop the $j$ notation and simply write,
\begin{equation}\label{2.1.2}
u(t_{n}) = g_{n} \phi + w_{n}.
\end{equation}
\begin{remark}
The disappearance of $e^{it_{n}^{1} \Delta}$ in $(\ref{2.1.2})$ will be explained soon.
\end{remark}

Now let $v$ be the solution to $(\ref{1.1})$ with initial data $\phi$, and let $I$ be the maximal interval of existence of $v$. Since
\begin{equation}\label{2.1.3}
\lim_{n \rightarrow \infty} \| u \|_{L_{t,x}^{\frac{2(d + 2)}{d}}((T^{-}(u), t_{n}) \times \mathbb{R}^{d})} = \infty, \qquad \text{and} \qquad \| u \|_{L_{t,x}^{\frac{2(d + 2)}{d}}((t_{n}, T^{+}(u)) \times \mathbb{R}^{d})} = \infty \qquad \forall n,
\end{equation}
\begin{equation}\label{2.2}
\| v \|_{L_{t,x}^{\frac{2(d + 2)}{d}}([0, \sup(I)) \times \mathbb{R}^{d})} = \| v \|_{L_{t,x}^{\frac{2(d + 2)}{d}}((\inf(I), 0] \times \mathbb{R}^{d})} = \infty.
\end{equation}
\begin{remark}
Equation $(\ref{2.1.3})$ is also the reason that it is unnecessary to allow for the possibility of terms like $[e^{it_{n}^{j} \Delta} \phi^{j}]$ in $(\ref{2.1})$ in place of $\phi^{j}$, where $t_{n}^{j} \rightarrow \pm \infty$. If $t_{n}^{j}$ converges along a subsequence to some $t_{0}^{j} \in \mathbb{R}$, then $\phi^{j}$ can be replaced by $e^{i t_{0}^{j} \Delta} \phi^{j}$.
\end{remark}

\begin{theorem}\label{t2.1}
To prove Theorem $\ref{t1.2}$, it suffices to prove that there exists a sequence $s_{m} \nearrow \sup(I)$, $s_{m} \geq 0$, such that
\begin{equation}\label{2.2.1}
g(s_{m}) v(s_{m}) \rightarrow Q, \qquad \text{in} \qquad L^{2}.
\end{equation}
\end{theorem}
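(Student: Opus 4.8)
The plan is to push the hypothesised convergence along $v$ back to $u$ using three ingredients: the symmetry group acts unitarily on $L^{2}$, the mass-critical stability theory, and a diagonal argument. First, since each $g_{n}$ in $(\ref{2.1.2})$ is a unitary operator on $L^{2}(\mathbb{R}^{d})$ and $\| w_{n} \|_{L^{2}} \to 0$, rewriting $(\ref{2.1.2})$ as $g_{n}^{-1} u(t_{n}) = \phi + g_{n}^{-1} w_{n}$ shows that
\[
\lim_{n \to \infty} \| g_{n}^{-1} u(t_{n}) - \phi \|_{L^{2}} = 0 ,
\]
i.e. the data $g_{n}^{-1} u(t_{n})$ converge in $L^{2}$ to $v(0) = \phi$.

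Fix $m$. Since $0 \in I$ and $s_{m} < \sup(I)$, the interval $[0, s_{m}]$ is a compact subinterval of $I$, so $\| v \|_{L_{t,x}^{2(d+2)/d}([0, s_{m}] \times \mathbb{R}^{d})} < \infty$. By the long-time perturbation (stability) lemma for the mass-critical NLS (see \cite{tao2008minimal}, \cite{killip2013nonlinear}), there is $n_{0}(m)$ so that for $n \geq n_{0}(m)$ the solution $u_{n}$ with data $g_{n}^{-1} u(t_{n})$ exists on $[0, s_{m}]$ (and a little beyond) and $\| u_{n}(s_{m}) - v(s_{m}) \|_{L^{2}} \to 0$ as $n \to \infty$. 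On the other hand, $u_{n}$ is the image of the solution $u(t_{n} + \cdot)$ under the solution symmetry attached to $g_{n}^{-1}$ that is built from the scaling, translation, Galilean and phase symmetries $(\ref{1.2})$, $(\ref{1.13})$, $(\ref{1.15})$, $(\ref{1.14})$; evaluating that symmetry at the internal time $s_{m}$ gives
\[
u_{n}(s_{m}) = g_{n,m}\, u(\tau_{n,m}) , \qquad \tau_{n,m} := t_{n} + \lambda_{n}^{-2} s_{m} ,
\]
where $g_{n,m}$ is again an element of the symmetry group, because the family of maps $\phi \mapsto \lambda^{d/2} e^{i x \cdot \xi} e^{i \gamma} \phi(\lambda x + x_{0})$ is closed under composition. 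In particular $u$ is defined on $[t_{n}, \tau_{n,m}]$, and since $s_{m} \geq 0$ we have $\tau_{n,m} \in [t_{n}, T^{+}(u))$.

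Applying the unitary operator $g(s_{m})$, which does not depend on $n$, to the convergence $g_{n,m} u(\tau_{n,m}) = u_{n}(s_{m}) \to v(s_{m})$ gives, for each fixed $m$,
\[
\lim_{n \to \infty} \big\| g(s_{m}) g_{n,m}\, u(\tau_{n,m}) - g(s_{m}) v(s_{m}) \big\|_{L^{2}} = 0 ,
\]
while the hypothesis of the theorem gives $\| g(s_{m}) v(s_{m}) - Q \|_{L^{2}} \to 0$ as $m \to \infty$. Now choose $n(m)$ increasing, with $n(m) \geq n_{0}(m)$ and large enough that the quantity above with $n = n(m)$ is $< 1/m$ and $t_{n(m)} > \tau_{n(m-1), m-1}$; set $t_{m} := \tau_{n(m), m}$ and $h_{m} := g(s_{m}) g_{n(m), m}$. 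Then $h_{m}$ is a symmetry-group element, say with parameters $\lambda_{\ast, m}, \xi_{\ast, m}, x_{\ast, m}, \gamma_{\ast, m}$; the sequence $t_{m}$ is increasing, lies below $T^{+}(u)$, and satisfies $t_{m} \geq t_{n(m)} \to T^{+}(u)$, so $t_{m} \nearrow T^{+}(u)$; and
\[
\| h_{m} u(t_{m}) - Q \|_{L^{2}} \leq \tfrac{1}{m} + \| g(s_{m}) v(s_{m}) - Q \|_{L^{2}} \longrightarrow 0 .
\]
Spelling out $h_{m}$, this is exactly $(\ref{1.17})$, so Theorem $\ref{t1.2}$ follows.

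The substantive point is the middle step: one needs the mass-critical stability theory to apply on the whole of $[0, s_{m}]$ for each $m$ — which is precisely the finiteness of the scattering-size norm of $v$ on compact subintervals of $I$, guaranteed because $I$ is the maximal interval of existence — and one needs to track the solution symmetry through the flow carefully enough to know that $u_{n}(s_{m})$ is a genuine symmetry-image of $u$ at a time $\tau_{n,m}$ that still lies in the lifespan of $u$ and tends to $T^{+}(u)$ after diagonalisation. The Galilean symmetry $(\ref{1.15})$ is the only one whose solution-level action is time-dependent, so the bookkeeping that keeps $g_{n,m}$ inside the symmetry group, and that gives $\tau_{n,m} \geq t_{n}$ via $s_{m} \geq 0$, is where the (routine but not entirely trivial) care is required.
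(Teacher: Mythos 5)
Your proposal is correct and follows essentially the same route as the paper: use the profile decomposition to get $L^{2}$ convergence of the symmetry-transformed data to $\phi$, propagate this to time $s_{m}$ by the mass-critical stability lemma (the paper's $(\ref{2.5})$ with constant $C(s_{m})$), note that the composition $g(s_{m})g_{n,m}$ is again a group element of the form $(\ref{2.1.1})$, and diagonalize in $m,n$, using $s_{m}\geq 0$ to conclude that the resulting times increase to $T^{+}(u)$. Your version merely makes explicit some points the paper leaves terse (why the stability lemma applies on $[0,s_{m}]$, the choice of $n(m)$, and the time-dependent Galilean bookkeeping); the apparent $\lambda_{n}^{2}$ versus $\lambda_{n}^{-2}$ discrepancy in the rescaled time is only a difference of convention for $g_{n}$ and is immaterial.
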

\begin{proof}
Suppose $g(s_{m}) v(s_{m}) \rightarrow Q$ in $L^{2}$. For any $m$ let $s_{m} \in I$ be such that
\begin{equation}\label{2.4}
\| g(s_{m}) v(s_{m}) - Q \|_{L^{2}} \leq 2^{-m}.
\end{equation}
Next, observe that $(\ref{2.1})$ implies
\begin{equation}\label{2.3}
e^{i \xi_{n} \cdot x} e^{i \gamma_{n}} \lambda_{n}^{d/2} u(t_{n}, \lambda_{n} x + x_{n}) \rightarrow \phi, \qquad \text{in} \qquad L^{2},
\end{equation}
and by $(\ref{1.15})$ and perturbation theory, for a fixed $m$, for $n$ sufficiently large,
\begin{equation}\label{2.5}
\aligned
\|  e^{-i \xi_{n}^{2} s_{m}} e^{i \xi_{n} \cdot x} e^{i \gamma_{n}} \lambda_{n}^{d/2} u(t_{n} + \lambda_{n}^{2} s_{m}, \lambda_{n} x + x_{n} - 2 \xi_{n} \lambda_{n} s_{m}) - v(s_{m}) \|_{L^{2}} \\ \leq C(s_{m}) \| e^{i \xi_{n} \cdot x} e^{i \gamma_{n}} \lambda_{n}^{d/2} u(t_{n}, \lambda_{n} x + x_{n}) - \phi \|_{L^{2}}.
\endaligned
\end{equation}
Therefore, by $(\ref{2.4})$, $(\ref{2.5})$, and the triangle inequality,
\begin{equation}\label{2.6}
\aligned
\| g(s_{m})(\lambda_{n}^{d/2} e^{-i \xi_{n}^{2} s_{m}} e^{i \xi_{n} \cdot x} e^{i \gamma_{n}} u(t_{n} + \lambda_{n}^{2} s_{m}, \lambda_{n} x + x_{n} - 2 \xi_{n} \lambda_{n} s_{m})) - Q \|_{L^{2}} \\ \leq C(s_{m}) \| e^{i \xi_{n} \cdot x} e^{i \gamma_{n}} \lambda_{n}^{d/2} u(t_{n}, \lambda_{n} x + x_{n}) - \phi \|_{L^{2}} + 2^{-m}.
\endaligned
\end{equation}
Since $g(s_{m})$ is also of the form $(\ref{2.1.1})$, there exists a group action $g_{n,m}$ of the form $(\ref{2.1.1}$ such that
\begin{equation}\label{2.6.1}
g(s_{m})(\lambda_{n}^{d/2} e^{-i \xi_{n}^{2} s_{m}} e^{i \xi_{n} \cdot x} e^{i \gamma_{n}} u(t_{n} + \lambda_{n}^{2} s_{m}, \lambda_{n} x + x_{n} - 2 \xi_{n} \lambda_{n} s_{m})) = g_{n,m} u(t_{n} + \lambda_{n}^{2} s_{m}, x).
\end{equation}
Equation $(\ref{2.6})$ implies
\begin{equation}\label{2.7}
\lim_{m, n \rightarrow \infty} \| g_{n,m} u(t_{n} + \lambda_{n}^{2} s_{m}, x)  - Q \|_{L^{2}} = 0.
\end{equation}
Since $t_{n} \nearrow T^{+}(u)$ and $s_{m} \geq 0$, $t_{n} + \lambda_{n}^{2} s_{m} \nearrow T^{+}(u)$, which implies Theorem $\ref{t1.2}$, assuming that $(\ref{2.2.1})$ is true.
\end{proof}

Now then, since $v(s)$ blows up in both time directions, $(\ref{2.2})$ holds, and $\| v \|_{L^{2}} = \| Q \|_{L^{2}}$, we can use the result of \cite{tao2008minimal} to prove that $v$ is almost periodic. That is, for all $s \in I$, there exist $\lambda(s) > 0$, $\xi(s) \in \mathbb{R}^{d}$, $x(s) \in \mathbb{R}^{d}$, and $\gamma(s) \in \mathbb{R}$ such that
\begin{equation}\label{2.8}
\lambda(s)^{-d/2} e^{ix \cdot \xi(s)} e^{i \gamma(s)} v(s, \frac{x - x(s)}{\lambda(s)}) \in K,
\end{equation}
where $K$ is a fixed precompact subset of $L^{2}$. Therefore, in the case when $\| u_{0} \|_{L^{2}} = \| Q \|_{L^{2}}$, it only remains to prove sequential convergence to $Q$ for this solution $v$.
\begin{theorem}\label{t2.2}
There exists a sequence $s_{m} \nearrow \sup(I)$ and a sequence of group actions $g(s_{m})$ of the form $(\ref{2.1.1})$ such that
\begin{equation}\label{2.9}
\| g(s_{m}) v(s_{m}) - Q \|_{L^{2}} \rightarrow 0.
\end{equation}
\end{theorem}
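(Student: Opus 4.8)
The plan is to run an energy/virial dichotomy and close it by a variational rigidity argument built on the sharp Gagliardo--Nirenberg inequality $(\ref{1.9})$.

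The first step is to bring in the structural properties of minimal-mass blow-up solutions: $v(s)\in H^1(\mathbb R^d)$ for every $s\in I$, so the energy $E_0:=E(v)$ is finite and conserved, and the orbit $\{\lambda(s)^{-d/2}e^{i\gamma(s)}v(s,\tfrac{\cdot-x(s)}{\lambda(s)})\}$ occurring in $(\ref{2.8})$ is in fact precompact in $H^1$, not merely in $L^2$ (this refines \cite{tao2008minimal}, \cite{killip2013nonlinear} using the frequency-localized estimates of \cite{dodson2015global}). By the proof of Theorem $\ref{t2.1}$ it is harmless to replace $v$ by a Galilean boost and a spatial translate of itself, so I would normalize $P(v)=0$; then the Galilean parameter in $(\ref{2.8})$ may be taken to vanish, and, using $P(v)=0$ together with the compactness, the spatial center $x(s)$ stays bounded. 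Since $\|v(s)\|_{L^2}=\|Q\|_{L^2}$, $(\ref{1.9})$ forces $E_0\ge 0$, and if $E_0=0$ then for every $s$ the function $v(s)$ is an extremizer of $(\ref{1.9})$, hence a symmetry image of $Q$, and $(\ref{2.9})$ holds trivially. So assume $E_0>0$.

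Next I would produce a sequence $s_m\nearrow\sup(I)$ along which $\|\nabla v(s_m)\|_{L^2}\to\infty$. If no such sequence exists, then $\|\nabla v(s)\|_{L^2}$ is bounded above for $s$ near $\sup(I)$; and if it also tended to $0$ along some sequence approaching $\sup(I)$, then by $(\ref{1.9})$ both $\|\nabla v\|_{L^2}$ and $\|v\|_{L^{2+4/d}}$ would vanish along it, forcing $E_0=0$. Hence $\|\nabla v(s)\|_{L^2}$ is bounded above and below near $\sup(I)$, which precludes finite-time blow-up, so $\sup(I)=+\infty$; moreover, by the $H^1$-precompactness, $\|\nabla v(s)\|_{L^2}\sim 1$ and $|x(s)|\le C$, the mass and gradient of $v(s)$ are then uniformly concentrated in a fixed ball for $s$ large. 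Applying the truncated virial identity to $V_R(s)=\int\chi(x/R)|x|^2|v(s,x)|^2\,dx$ and using $(\ref{1.15.0})$ gives $V_R''(s)=16E_0+\mathcal E_R(s)$ with $|\mathcal E_R(s)|\lesssim\int_{|x|\ge R}(|\nabla v(s)|^2+|v(s)|^{2+4/d})\,dx$ uniformly for large $s$; taking $R$ so large that $|\mathcal E_R(s)|\le 8E_0$ gives $V_R''(s)\ge 8E_0>0$, hence $V_R(s)\to+\infty$, contradicting $V_R(s)\le 4R^2\|Q\|_{L^2}^2$.

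Along such a sequence, let $g(s_m)$ be the element of $(\ref{2.1.1})$, with trivial Galilean part, that rescales $v(s_m)$ to unit scale and recenters it at the origin, and set $h_m=g(s_m)v(s_m)$. Then $\|h_m\|_{L^2}=\|Q\|_{L^2}$, $\{h_m\}$ is bounded in $H^1$, and $E(h_m)=\tfrac{\|\nabla Q\|_{L^2}^2}{\|\nabla v(s_m)\|_{L^2}^2}E_0\to 0$. By $H^1$-precompactness a subsequence converges in $L^2$, and by interpolation with the uniform $H^1$ bound also in $L^{2+4/d}$, to some $\psi$ with $\|\psi\|_{L^2}=\|Q\|_{L^2}$; weak lower semicontinuity of the Dirichlet energy then yields $E(\psi)\le\liminf E(h_m)=0$, while $(\ref{1.9})$ yields $E(\psi)\ge 0$. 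Hence $E(\psi)=0$, so $\psi$ is an extremizer of $(\ref{1.9})$ of mass $\|Q\|_{L^2}$, i.e.\ $\psi=TQ$ for some $T$ of the form $(\ref{2.1.1})$, and the convergence $h_m\to\psi$ is strong in $L^2$ because the masses agree; replacing $g(s_m)$ by $T^{-1}g(s_m)$, still of the form $(\ref{2.1.1})$, proves $(\ref{2.9})$. I expect the genuine obstacle to be the input of the first paragraph — the additional $H^1$ regularity and $H^1$-precompactness of $v$, the momentum normalization, and the boundedness of $x(s)$ in the non-radial setting $d\ge2$; this is precisely where translation and Galilean symmetries must be carried along and where the long-time Strichartz and frequency-localized virial/Morawetz machinery of \cite{dodson2015global} is needed, whereas the dichotomy and rigidity steps above are then routine.
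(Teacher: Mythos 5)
You have correctly identified the obstacle yourself, but it is not a peripheral technicality to be deferred: the claim in your first paragraph --- that $v(s)\in H^{1}$ for every $s$, that $E(v)$ is finite and conserved, and that the modulated orbit in $(\ref{2.8})$ is precompact in $H^{1}$ rather than merely in $L^{2}$ --- is unproven, is not supplied by \cite{tao2008minimal} or \cite{killip2013nonlinear}, and is essentially the entire content of the theorem. The reduction of Section 2 produces only an $L^{2}$-almost periodic solution; the extra regularity available from the long-time Strichartz machinery of \cite{dodson2015global}, \cite{dodson2012global}, \cite{dodson2016global2} is confined to the rapid cascade regime $\int \lambda(t)^{3}\,dt<\infty$, which is exactly what the paper exploits in Theorem \ref{t4.1} (see $(\ref{4.8})$--$(\ref{4.9})$); in the complementary scenario $(\ref{4.17})$ no uniform $H^{1}$ control of $v(s)$ is known. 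Without that input, $E_{0}$ may be undefined and $\|\nabla v(s)\|_{L^{2}}$ infinite for every $s$, so the dichotomy of your second paragraph cannot be set up, the truncated virial errors cannot be estimated, and the weak lower semicontinuity step has nothing to act on. Granting the $H^{1}$ precompactness, the rest of your argument (momentum normalization, bounded $x(s)$, convexity of the truncated variance, Gagliardo--Nirenberg rigidity) is a classical concentration--compactness/virial scheme and would indeed close; but that is precisely why the hypothesis cannot be assumed.

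The paper's proof is built to avoid this assumption. It never evaluates $E(v(s))$; instead it works with the frequency-truncated, spatially cut off, Galilean-renormalized object $\chi(\tfrac{x-x(t)}{R})\,e^{ix\cdot\xi(s)}P_{\leq T}u$, whose energy is finite by construction, and shows via the interaction Morawetz functional $(\ref{3.1})$, $(\ref{6.1})$, $(\ref{4.19})$ --- with the modulation $\xi(s)$ chosen at each spatial center $s$ to annihilate the local momentum $(\ref{3.10})$, and with the localized Gagliardo--Nirenberg/Sobolev estimates $(\ref{3.14})$--$(\ref{3.16})$ and $(\ref{6.27})$ absorbing the contribution far from $x(t)$ --- that the time average of this truncated localized energy is $o(T)$ (see $(\ref{3.25})$, $(\ref{6.40})$). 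Only after extracting a sequence of times along which this quantity vanishes does it invoke the $L^{2}$ almost periodicity, together with the uniform $H^{1}$ bound on the truncated localized pieces, to obtain a zero-energy weak $H^{1}$ limit of mass $\|Q\|_{L^{2}}$ and conclude by the rigidity of $(\ref{1.9})$. To make your route work you would need either to prove the unconditional $H^{1}$ precompactness (not expected to be available) or to rerun your dichotomy at the level of $P_{\leq T}u$ with the cutoffs and modulations above --- at which point you are reconstructing the paper's argument rather than replacing it.
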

The proof of this fact will occupy the next two sections. Since $v$ is almost periodic, the tools used in \cite{dodson2015global} are available in this case as well.
\begin{remark}
In order for notation to align with notation in prior works, such as \cite{dodson2015global}, it will be convenient to relabel so that $v$ is now denoted $u$, and $s$ now denoted $t$.
\end{remark}

Similarly, when proving Theorem $\ref{t1.3}$, we use Lemma $4.2$ from \cite{fan20182} to reduce to an almost periodic solution.
\begin{lemma}\label{l2.3}
Let $u$ be a solution to $(\ref{1.1})$ satisfying the assumptions of Theorem $\ref{t1.3}$. Then there exists a sequence $t_{n} \nearrow T^{+}(u)$ such that $u(t_{n})$ admits the profile decomposition in $(\ref{2.1})$, and there is a unique profile $\phi_{1}$, such that $\| \phi_{1} \|_{L^{2}} \geq \| Q \|_{L^{2}}$ and the solution $v$ to $(\ref{1.1})$ with initial data $\phi_{1}$ is an almost periodic solution to $(\ref{1.1})$ that does not scatter forward or backward in time.
\end{lemma}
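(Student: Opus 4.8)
The plan is to carry out the concentration--compactness reduction of \cite{tao2008minimal} (see also \cite{killip2013nonlinear}, \cite{dodson2019defocusing}), much as in the proof of Lemma~4.2 of \cite{fan20182}, the only change being that the full symmetry group $(\ref{2.1.1})$ --- including the space--time translations $(\ref{1.13})$ and the Galilean boosts $(\ref{1.15})$ --- is used, rather than the smaller group available under the symmetry hypothesis of \cite{fan20182}. First, since $u$ does not scatter forward in time, $\| u \|_{L_{t,x}^{\frac{2(d+2)}{d}}([0, T^{+}(u)) \times \mathbb{R}^{d})} = \infty$; fix any $t_{n} \nearrow T^{+}(u)$. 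The sequence $u(t_{n})$ is bounded in $L^{2}$, so after passing to a subsequence it admits the linear profile decomposition $(\ref{2.1})$, with group elements of the form $(\ref{2.1.1})$ and with $w_{n}^{J}$ enjoying the vanishing--Strichartz property. To each $\phi^{j}$ I attach its nonlinear profile $v^{j}$, the maximal solution of $(\ref{1.1})$ determined by $\phi^{j}$ and the time shift $t_{n}^{j}$; by the long--time perturbation (stability) theory for $(\ref{1.1})$, if every $v^{j}$ were to scatter forward in time on its relevant interval, then $u$ itself would scatter forward in time, a contradiction. Hence some $v^{j}$ fails to scatter forward in time; its time shift cannot tend to $+\infty$ (that would force forward scattering), and if it tends to $-\infty$ one re-solves from $-\infty$ and still obtains a solution failing to scatter forward.

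Next I would exploit the mass. By almost orthogonality of the profiles, for every $J$,
\[
\| Q \|_{L^{2}}^{2} < \| u_{0} \|_{L^{2}}^{2} = \sum_{j=1}^{J} \| \phi^{j} \|_{L^{2}}^{2} + \lim_{n \to \infty} \| w_{n}^{J} \|_{L^{2}}^{2} \leq \bigl( \| Q \|_{L^{2}} + \alpha \bigr)^{2}.
\]
By the scattering theorem of \cite{dodson2015global}, any non-scattering solution of $(\ref{1.1})$ has mass at least $\| Q \|_{L^{2}}$, so the non--forward--scattering profile above, call its data $\phi_{1}$, satisfies $\| \phi_{1} \|_{L^{2}} \geq \| Q \|_{L^{2}}$. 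Taking $\alpha$ small enough that $2 \alpha \| Q \|_{L^{2}} + \alpha^{2} < \epsilon_{0}^{2}$, with $\epsilon_{0}$ the small--data threshold of \cite{cazenave1989some}, the displayed identity gives $\sum_{j \neq 1} \| \phi^{j} \|_{L^{2}}^{2} + \lim_{n} \| w_{n}^{J} \|_{L^{2}}^{2} < \epsilon_{0}^{2}$: thus $\phi_{1}$ is the unique non--forward--scattering profile, while every other profile, and also the error $w_{n}^{J}$, lies in the small--data regime and scatters in both directions with small norm. I would then show that $v$, the solution with data $\phi_{1}$, cannot scatter backward in time either: if it did, then --- the other nonlinear profiles also scattering backward with controlled norm and $w_{n}^{J}$ being negligible for $J, n$ large --- the same superposition/stability argument, run backward from $t_{n}$, would bound $\| u \|_{L_{t,x}^{\frac{2(d+2)}{d}}((T^{-}(u), t_{n}) \times \mathbb{R}^{d})}$ uniformly in $n$; but this quantity tends to $\infty$ as $n \to \infty$, since $\| u \|_{L_{t,x}^{\frac{2(d+2)}{d}}((0, t_{n}) \times \mathbb{R}^{d})} \nearrow \| u \|_{L_{t,x}^{\frac{2(d+2)}{d}}((0, T^{+}(u)) \times \mathbb{R}^{d})} = \infty$ --- a contradiction.

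The remaining --- and hardest --- step is to upgrade $v$ to an almost periodic solution, i.e., to establish the precompactness $(\ref{2.8})$ of its orbit modulo $(\ref{2.1.1})$. For this I would pass to a non--forward--scattering solution of \emph{minimal mass} among those realizable as nonlinear profiles of $u$ along sequences tending to $T^{+}(u)$: a mass--minimizing sequence of such solutions, processed by a diagonal linear profile decomposition as in the construction of minimal--mass blowup solutions in \cite{tao2008minimal}, \cite{killip2013nonlinear}, converges modulo $(\ref{2.1.1})$ to a solution $\bar{v}$ of the prescribed minimal mass that does not scatter forward in time, and a parallel diagonal argument produces a sequence $t_{n} \nearrow T^{+}(u)$ together with a profile decomposition $(\ref{2.1})$ of $u(t_{n})$ in which $\bar{v}(0)$ is the unique non--forward--scattering profile. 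Minimality of $\bar{v}$ then excludes, via one more profile decomposition applied to $\bar{v}$ near $T^{+}(\bar{v})$, any residual mass or second profile along any sequence of times, which is precisely the almost periodicity $(\ref{2.8})$; and the backward argument of the previous paragraph applies to $\bar{v}$ verbatim. Setting $\phi_{1} = \bar{v}(0)$ completes the reduction, and since no symmetry of the data is used beyond membership in the orbit of $(\ref{2.1.1})$, the proof of Lemma~4.2 of \cite{fan20182} carries over. The obstacle concentrated in this step is that a single profile decomposition at $t_{n} \nearrow T^{+}(u)$ controls only the forward-in-time behavior and, because $\| \phi_{1} \|_{L^{2}}$ may strictly exceed $\| Q \|_{L^{2}}$, only produces an $O(\sqrt{\alpha})$ approximate single bubble; extracting a genuinely minimal, two--sided, almost periodic solution therefore requires the full concentration--compactness apparatus rather than a one--shot argument.
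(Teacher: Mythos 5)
The paper does not actually prove this lemma: it is imported wholesale as Lemma 4.2 of \cite{fan20182} (restated later as Lemma \ref{l5.1}), with the implicit claim that the argument there survives the enlargement of the symmetry group to the full group $(\ref{2.1.1})$ including translations and Galilean boosts. Your sketch is, in outline, a correct reconstruction of what lies behind that citation, and it follows the same route: linear profile decomposition at $t_{n} \nearrow T^{+}(u)$; stability theory to produce at least one non-forward-scattering nonlinear profile; the scattering threshold of \cite{dodson2015global} together with the Pythagorean mass expansion and the smallness of $\alpha$ to force uniqueness of that profile and to push all remaining mass below the small-data threshold; a backward stability argument off $(\ref{2.1.3})$ to rule out backward scattering; and minimization of the mass over the restricted class of nonlinear profiles of $u$ realizable along sequences tending to $T^{+}(u)$ to upgrade to almost periodicity. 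Two points in that last step deserve more care than you give them: (i) the infimum over the restricted class must actually be attained, which requires its own diagonal argument showing the class is closed under the relevant limiting operation; and (ii) almost periodicity in the sense of $(\ref{2.8})$ is compactness of the orbit over \emph{all} of $I$, so you must also handle sequences $s_{m} \searrow \inf(I)$, which forces you to verify that a non-\emph{backward}-scattering sub-profile of the minimizer is again realizable, by composing decompositions, as a profile of $u$ along times tending to $T^{+}(u)$ --- this is precisely where the two-sided non-scattering established in your third paragraph gets used. Neither point is a flaw in the strategy, but both are where the substantive work in the proof of Lemma 4.2 of \cite{fan20182} actually lives, and your sketch passes over them quickly.
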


In this case as well, it suffices to show that passing to a subsequence, $g(s_{m}) v(s_{m}) \rightharpoonup Q$, using similar arguments as in the case when $\| u \|_{L^{2}} = \| Q \|_{L^{2}}$. Indeed, by asymptotic orthogonality of the profile decomposition $(\ref{2.1})$, for $n(m)$ sufficiently large,
\begin{equation}\label{2.10}
(g_{n}^{1})^{-1} g(s_{m}) u(t_{n} + \lambda_{n}^{2} s_{m}, x) = g(s_{m}) v(s_{m}) + R_{n(m), m} + g(s_{m}) \sum_{j = 2}^{J} g_{n(m)}^{j} \Phi^{j} + g(s_{m}) w_{n}^{J},
\end{equation}
where $\| R_{n(m), m} \|_{L^{2}} \rightarrow 0$ as $m \rightarrow \infty$, $g_{n(m)}^{j} \Phi^{j}$ are the solutions to $(\ref{1.1})$ with data $g_{n(m)}^{j} \phi^{j}$ or that scatter forward or backward in time to $\phi^{j}$, and $w_{n}^{J}$ is the solution to $(\ref{1.1})$ with initial data $w_{n}^{J}$. Furthermore, asymptotic orthogonality implies that
\begin{equation}\label{2.11}
g(s_{m}) \sum_{j = 2}^{J} g_{n(m)}^{j} \Phi^{j} + g(s_{m}) w_{n}^{J} \rightharpoonup 0, \qquad \text{in} \qquad L^{2},
\end{equation}
which proves the reduction.

\begin{remark}
Since $(\ref{2.1})$ is not a profile decomposition for a minimal mass blowup solution, it is possible that $t_{n}^{j} \rightarrow \pm \infty$ for $j \geq 2$.
\end{remark}

\section{Proof of Theorem $\ref{t2.2}$ when $\lambda(t) = 1$ and $d = 2$}
It will be convenient to begin by discussing the $\lambda(t) = 1$ case in dimension $d = 2$, before generalizing the argument to higher dimensions and variable $\lambda(t)$. When $\lambda(t) = 1$, the solution $u$ is global in both time directions, $I = \mathbb{R}$. Following \cite{dodson2015global} and \cite{dodson20202}, we will use the interaction Morawetz estimate
\begin{equation}\label{3.1}
M(t) = \int \int |Iu(t, y)|^{2} Im[\bar{Iu} \nabla Iu](t,x) \cdot (x - y) \psi(x - y) dx dy,
\end{equation}
where $I$ is the Fourier truncation operator $P_{\leq T}$, $T = 2^{k}$ for some $k \in \mathbb{Z}_{\geq 0}$. As in \cite{dodson2015global}, $\psi(|x - y|)$ is a radial function,
\begin{equation}\label{3.2}
\psi(x) = \frac{1}{|x - y|} \int_{0}^{|x - y|} \phi(s) ds,
\end{equation}
where $\phi(|x|)$ is a radial function given by
\begin{equation}\label{3.3}
\phi(|x - y|) = \frac{1}{R^{2}} \int \chi^{2}(\frac{x - y - s}{R}) \chi^{2}(\frac{s}{R}) ds = \frac{1}{R^{2}} \int \chi^{2}(\frac{x - s}{R}) \chi^{2}(\frac{s - y}{R}) ds = \frac{1}{R^{2}} \int \chi^{2}(\frac{x - s}{R}) \chi^{2}(\frac{y - s}{R}) ds,
\end{equation}
where $\chi$ is a radial, smooth, compactly supported function, $\chi(x) = 1$ for $|x| \leq 1$ and $\chi(x)$ is supported on $|x| \leq 2$. In addition, $\chi(|x|)$ is decreasing as a function of the radius. $R$ is a large, fixed constant that will be allowed to go to infinity as $T \rightarrow \infty$.
\begin{remark}
$\phi$ decreasing as a function of the radius implies that $\psi$ is decreasing as a function of the radius.
\end{remark}

By direct computation,
\begin{equation}\label{3.4}
\aligned
\frac{d}{dt} M(t) = 2 \int \int |Iu(t,y)|^{2} Re[\partial_{j} \bar{Iu} \partial_{k} Iu](t,x) [\delta_{jk} \psi(x - y) + \frac{(x - y)_{j} (x - y)_{k}}{|x - y|} \psi'(x - y)] dx dy \\ -2 \int \int Im[\bar{Iu} \partial_{k} Iu](t,y) Im[\bar{Iu} \partial_{j} Iu](t,x) [\delta_{jk} \psi(x - y) + \frac{(x - y)_{j} (x - y)_{k}}{|x - y|} \psi'(x - y)] dx dy \\
+ \frac{1}{2} \int \int |Iu(t,y)|^{2} |Iu(t,y)|^{2} [\Delta \phi(x - y) + \Delta \psi(x - y)] dx dy \\
- \int \int |Iu(t,y)|^{2} |Iu(t,x)|^{4} [\psi(x - y) + \frac{1}{2} \psi'(x - y) |x - y|] dx dy + \mathcal E,
\endaligned
\end{equation}
where $\mathcal E$ are the error terms arising from $\mathcal N$,
\begin{equation}\label{3.4.1}
i Iu_{t} + \Delta I u + F(Iu) = F(Iu) - I F(u) = \mathcal N.
\end{equation}
It is known from \cite{dodson2016global2} that
\begin{equation}\label{3.4.2}
\int_{0}^{T} \mathcal N dt \lesssim R o(T),
\end{equation}
and
\begin{equation}\label{3.4.3}
\sup_{t \in [0, T]} |M(t)| \lesssim R o(T).
\end{equation}
Therefore, choosing $R \nearrow \infty$ sufficiently slowly,
\begin{equation}\label{3.4.4}
\lim_{T \rightarrow \infty} \frac{R o(T)}{T} = 0.
\end{equation}

By direct computation,
\begin{equation}\label{3.5}
\phi(x) = \frac{1}{R^{2}} \int \chi^{2}(\frac{x - s}{R}) \chi^{2}(\frac{s}{R}) ds \sim 1,
\end{equation}
for $|x| \leq R$, $\phi(x)$ is supported on the set $|x| \leq 4R$, and $\phi(x)$ is a radially symmetric function that is decreasing as $|x| \rightarrow \infty$. Therefore, $(\ref{3.2})$ implies that
\begin{equation}\label{3.6}
|\psi(x)| \lesssim \frac{R}{|x|}, \qquad \text{for all} \qquad x \in \mathbb{R}^{2}.
\end{equation}
Also, by direct computation,
\begin{equation}\label{3.7}
\Delta \phi(x) = \frac{1}{R^{2}} \int \Delta \chi^{2}(\frac{x - s}{R}) \chi^{2}(\frac{s}{R}) ds \lesssim \frac{1}{R^{2}}.
\end{equation}
Next, by the same calculations that give $(\ref{3.6})$,
\begin{equation}\label{3.7.1}
\Delta \psi(x) \lesssim \frac{R}{|x|^{3}},
\end{equation}
so $|\Delta \psi(x)| \lesssim \frac{1}{R^{2}}$ for $|x| \gtrsim R$. By the fundamental theorem of calculus, since $\phi'(0) = 0$, by $(\ref{3.2})$,
\begin{equation}\label{3.7.2}
\psi(r) = \phi(0) + \frac{1}{r} \int_{0}^{r} \int_{0}^{s} (s - t) \phi''(t) dt ds,
\end{equation}
so by $(\ref{3.7})$, $|\Delta \psi(x)| \lesssim \frac{1}{R^{2}}$ for $|x| \lesssim R$.
Therefore,
\begin{equation}\label{3.8}
 \frac{1}{2} \int \int |Iu(t,y)|^{2} |Iu(t,y)|^{2} [\Delta \phi(x - y) + \Delta \psi(x - y)] dx dy \lesssim \frac{1}{R^{2}} \| u \|_{L^{2}}^{4}.
 \end{equation}

Next, decompose
\begin{equation}\label{3.8.1}
\delta_{jk} \psi(x - y) + \frac{(x - y)_{j} (x - y)_{k}}{|x - y|} \psi'(x - y) = \delta_{jk} \phi(x - y) + \delta_{jk} [\psi(x - y) - \phi(x - y)] + \frac{(x - y)_{j} (x - y)_{k}}{|x - y|} \psi'(x - y).
\end{equation}
By $(\ref{3.2})$,
\begin{equation}\label{3.8.2}
(\ref{3.8.1}) = \delta_{jk} \phi(x - y) - \delta_{jk} |x - y| \psi'(|x - y|) + \frac{(x - y)_{j} (x - y)_{k}}{|x - y|} \psi'(x - y).
\end{equation}
Now then,
\begin{equation}\label{3.9}
\aligned
-\int \int Im[\bar{Iu} \partial_{k} Iu] Im[\bar{Iu} \partial_{j} Iu] \delta_{jk} \phi(x - y) dx dy + \int \int |Iu(t,y)|^{2} |\nabla Iu(t,x)|^{2} \phi(x - y) dx dy \\
= \frac{1}{R^{2}} \int (\int \chi^{2}(\frac{y - s}{R}) Im[\bar{Iu} \partial_{j} Iu] dy)(\int \chi^{2}(\frac{x - s}{R}) Im[\bar{Iu} \partial_{j} Iu] dx) ds \\ + \frac{1}{R^{2}} \int (\int \chi^{2}(\frac{y - s}{R}) |Iu(t,y)|^{2} dy)(\int \chi^{2}(\frac{x - s}{R}) |\nabla Iu(t,x)|^{2} dx) ds.
\endaligned
\end{equation}
Fix $s \in \mathbb{R}^{2}$. For any $\xi \in \mathbb{R}^{2}$ and $j \in \mathbb{Z}$,
\begin{equation}\label{3.10}
\int \chi^{2}(\frac{y - s}{R}) Im[\overline{e^{iy \xi} Iu} \partial_{j}(e^{iy \xi} Iu)] dy = \int \chi^{2}(\frac{y - s}{R}) Im[\bar{Iu} \partial_{j} Iu] dy + \xi_{j} \int \chi^{2}(\frac{y - s}{R}) |Iu(t,y)|^{2} dy,
\end{equation}
and
\begin{equation}\label{3.11}
\aligned
\int \chi^{2}(\frac{x - s}{R}) |\partial_{j} (e^{ix \xi} Iu)|^{2} dx = \xi_{j}^{2} \int \chi^{2}(\frac{x - s}{R}) |Iu|^{2} dx \\ + 2 \xi_{j} \int \chi^{2}(\frac{x - s}{R}) Im[\bar{Iu} \partial_{j} Iu] dx + \int \chi^{2}(\frac{x - s}{R}) |\partial_{j} Iu|^{2} dx.
\endaligned
\end{equation}
Therefore, $(\ref{3.9})$ is invariant under the Galilean transformation, so it is convenient to choose $\xi(s)$ such that $(\ref{3.10}) = 0$. For notational convenience, let 
\begin{equation}\label{3.12}
v_{s} = e^{ix \xi(s)} Iu.
\end{equation}
Then by the fundamental theorem of calculus and $(\ref{3.4.2})$--$(\ref{3.12})$, if $R \nearrow \infty$ as $T \nearrow \infty$,
\begin{equation}\label{3.13}
\aligned
2 \int_{0}^{T} \frac{1}{R^{2}} \int (\int \chi^{2}(\frac{y - s}{R}) |v_{s}(t,y)|^{2}) (\int \chi^{2}(\frac{x - s}{R}) |\nabla (v_{s})(t,x)|^{2} dx) ds dt \\
2 \int \int |Iu(t,y)|^{2} Re[\partial_{j} \bar{Iu} \partial_{k} Iu](t,x) [\delta_{jk} |x - y| \psi'(x - y) + \frac{(x - y)_{j} (x - y)_{k}}{|x - y|} \psi'(x - y)] dx dy \\ -2 \int \int Im[\bar{Iu} \partial_{k} Iu](t,y) Im[\bar{Iu} \partial_{j} Iu](t,x) [\delta_{jk} |x - y| \psi'(x - y) + \frac{(x - y)_{j} (x - y)_{k}}{|x - y|} \psi'(x - y)] dx dy \\
- \int_{0}^{T} \frac{1}{R^{2}} \int (\int \chi^{2}(\frac{y - s}{R}) |v_{s}(t,y)|^{2}) (\int \chi^{2}(\frac{x - s}{R}) |v_{s}(t,x)|^{4} dx) ds dt \\
- \frac{1}{2} \int_{0}^{T} \int |Iu(t,y)|^{2} [\psi(x - y) - \phi(x - y)] |Iu(t,x)|^{4} dx dy dt \lesssim R o(T).
\endaligned
\end{equation}
Following the computations in \cite{dodson2015global} for the angular derivatives in dimensions $d \geq 2$,
\begin{equation}\label{3.13.0}
\aligned
2 \int \int |Iu(t,y)|^{2} Re[\partial_{j} \bar{Iu} \partial_{k} Iu](t,x) [\delta_{jk} |x - y| \psi'(x - y) + \frac{(x - y)_{j} (x - y)_{k}}{|x - y|} \psi'(x - y)] dx dy \\ -2 \int \int Im[\bar{Iu} \partial_{k} Iu](t,y) Im[\bar{Iu} \partial_{j} Iu](t,x) [\delta_{jk} |x - y| \psi'(x - y) + \frac{(x - y)_{j} (x - y)_{k}}{|x - y|} \psi'(x - y)] dx dy \geq 0.
\endaligned
\end{equation}
Therefore, by $(\ref{3.13})$,
\begin{equation}\label{3.13.0.1}
\aligned
2 \int_{0}^{T} \frac{1}{R^{2}} \int (\int \chi^{2}(\frac{y - s}{R}) |v_{s}(t,y)|^{2}) (\int \chi^{2}(\frac{x - s}{R}) |\nabla (v_{s})(t,x)|^{2} dx) ds dt \\
- \int_{0}^{T} \frac{1}{R^{2}} \int (\int \chi^{2}(\frac{y - s}{R}) |v_{s}(t,y)|^{2}) (\int \chi^{2}(\frac{x - s}{R}) |v_{s}(t,x)|^{4} dx) ds dt \\
- \frac{1}{2} \int_{0}^{T} \int |Iu(t,y)|^{2} [\psi(x - y) - \phi(x - y)] |Iu(t,x)|^{4} dx dy dt \lesssim R o(T).
\endaligned
\end{equation}

By the Arzela--Ascoli theorem and $(\ref{2.8})$, for any $\eta > 0$, there exists $C(\eta) < \infty$ such that
\begin{equation}\label{3.13.1}
\int_{|x - x(t)| \geq \frac{C(\eta)}{\lambda(t)}} |u(t,x)|^{2} dx < \eta^{2}.
\end{equation}
By H{\"o}lder's inequality, Strichartz estimates, and $\lambda(t) = 1$,
\begin{equation}\label{3.13.2}
\aligned
\int_{a}^{a + 1} \int_{|y - y(t)| \geq C(\eta)} |Iu(t,y)|^{2} \int |Iu(t,x)|^{4} dx dy dt + \int_{a}^{a + 1} \int |Iu(t,y)|^{2} \int_{|x - x(t)| \geq C(\eta)} |Iu(t,x)|^{4} dx dy dt \\ \lesssim \eta^{2} \| u \|_{L_{t,x}^{4}([a, a + 1] \times \mathbb{R}^{2})}^{4} + \eta \| u \|_{L_{t}^{3} L_{x}^{6}([a, a + 1] \times \mathbb{R}^{2})}^{3} \lesssim \eta.
\endaligned
\end{equation}
Finally, by $(\ref{3.2})$ and the fundamental theorem of calculus,
\begin{equation}\label{3.13.3}
\int_{a}^{a + 1} \int_{|y - y(t)| \leq C(\eta)} \int_{|x - x(t)| \leq C(\eta)} |Iu(t,y)|^{2} [\psi(x - y) - \phi(x - y)] |Iu(t,x)|^{4} dx dy dt \lesssim \frac{C(\eta)}{R} \| u \|_{L_{t,x}^{4}([a, a + 1] \times \mathbb{R}^{2})}^{4}.
\end{equation}
Therefore,
\begin{equation}\label{3.13.4}
\frac{1}{2} \int_{0}^{T} \int |Iu(t,y)|^{2} [\psi(x - y) - \phi(x - y)] |Iu(t,x)|^{4} dx dy dt \lesssim \eta T + \frac{C(\eta)}{R} T,
\end{equation}
so
\begin{equation}\label{3.13.5}
\aligned
2 \int_{0}^{T} \frac{1}{R^{2}} \int (\int \chi^{2}(\frac{y - s}{R}) |v_{s}(t,y)|^{2}) (\int \chi^{2}(\frac{x - s}{R}) |\nabla (v_{s})(t,x)|^{2} dx) ds dt \\
- \int_{0}^{T} \frac{1}{R^{2}} \int (\int \chi^{2}(\frac{y - s}{R}) |v_{s}(t,y)|^{2}) (\int \chi^{2}(\frac{x - s}{R}) |v_{s}(t,x)|^{4} dx) ds dt \lesssim R o(T) + \eta T + \frac{C(\eta)}{R} T.
\endaligned
\end{equation}

In \cite{dodson20202}, following  \cite{merle2001existence}, we used the fundamental theorem of calculus to obtain a bound on $|u|^{6}$ far away from the interval $|x - x(t)| \leq C(\eta)$. Here, we will use a computation from \cite{merle1993determination} in two dimensions. Let $f \in H^{1}(\mathbb{R}^{2})$ be any function and fix some $s \in \mathbb{R}^{2}$. By the fundamental theorem of calculus,
\begin{equation}\label{3.14}
\aligned
\int \chi(\frac{x - s}{R}) |f(x_{1}, x_{2})|^{4} dx_{1} dx_{2} \leq \int (\int |f(x_{1}, x_{2})|^{2} \chi(\frac{x - s}{R})) \cdot (\sup_{x_{1} \in \mathbb{R}} \chi(\frac{x - s}{R}) |f(x_{1}, x_{2})|^{2}) dx_{2} \\
\leq \int (\int |f(x_{1}, x_{2})|^{2} \chi(\frac{x - s}{R}) dx_{1})(\int |\partial_{x_{1}}(\chi(\frac{x - s}{R}) |f(x_{1}, x_{2})|^{2}|) dx_{1}) dx_{2} \\
\lesssim \int (\int |f(x_{1}, x_{2})|^{2} \chi(\frac{x - s}{R}) dx_{1}) \cdot (\int \chi(\frac{x - s}{R})^{2} |\partial_{x_{1}} f(x_{1}, x_{2})|^{2} dx_{1})^{1/2} (\int |f(x_{1}, x_{2})|^{2} dx_{1})^{1/2} dx_{2} \\
+ \int (\int |f(x_{1}, x_{2})|^{2} \chi(\frac{x - s}{R}) dx_{1}) \cdot (\frac{1}{R} \int |f(x_{1}, x_{2})|^{2} dx_{1}) dx_{2}.
\endaligned
\end{equation}

Again by the fundamental theorem of calculus and H{\"o}lder's inequality,
\begin{equation}\label{3.15}
\aligned
\int (\int |f(x_{1}, x_{2})|^{2} \chi(\frac{x - s}{R}) dx_{1}) \cdot (\int \chi(\frac{x - s}{R})^{2} |\partial_{x_{1}} f(x_{1}, x_{2})|^{2} dx_{1})^{1/2} (\int |f(x_{1}, x_{2})|^{2} dx_{1})^{1/2} dx_{2} \\
\lesssim \| \chi(\frac{x - s}{R}) \nabla f(x) \|_{L^{2}} \| f \|_{L^{2}} \cdot \sup_{x_{2}} (\int |f(x_{1}, x_{2})|^{2} \chi(\frac{x - s}{R}) dx_{1}) \\
\lesssim \| \chi(\frac{x - s}{R}) \nabla f(x) \|_{L^{2}} \| f \|_{L^{2}} \cdot (\int \int |\partial_{x_{2}} f(x_{1}, x_{2})| |f(x_{1}, x_{2})| \chi(\frac{x - s}{R}) dx_{1} dx_{2}) \\
+ \frac{1}{R} \| \chi(\frac{x - s}{R}) \nabla f(x) \|_{L^{2}} \| f \|_{L^{2}} \cdot (\int \int |f(x_{1}, x_{2})|^{2}) dx_{1} dx_{2}) \\ \lesssim \| \chi(\frac{x - s}{R}) \nabla f \|_{L^{2}}^{2} \| f \|_{L^{2}}^{2} + \frac{1}{R} \| \chi(\frac{x - s}{R}) \nabla f \|_{L^{2}} \| f \|_{L^{2}}^{3}.
\endaligned
\end{equation}
Also by H{\"o}lder's inequality and the fundamental theorem of calculus,
\begin{equation}\label{3.16}
\aligned
\int (\int |f(x_{1}, x_{2})|^{2} \chi(\frac{x - s}{R}) dx_{1}) \cdot (\frac{1}{R} \int |f(x_{1}, x_{2})|^{2} dx_{1}) dx_{2} \lesssim \frac{1}{R} \| f \|_{L^{2}}^{2} \cdot \sup_{x_{2}} (\int |f(x_{1}, x_{2})|^{2} \chi(\frac{x - s}{R}) dx_{1}) \\
\lesssim \frac{1}{R^{2}} \| f \|_{L^{2}}^{4} + \frac{1}{R} \| f \|_{L^{2}}^{3} \| \chi(\frac{x - s}{R}) \nabla f \|_{L^{2}}.
\endaligned
\end{equation}
For a fixed $t$, let $f = (1 - \chi(\frac{x - x(t)}{C(\eta)})) v_{s}(t,x)$. By $(\ref{3.13.1})$, $\| f \|_{L^{2}} \leq \eta$, and by the product rule,
\begin{equation}\label{3.16.1}
\| \chi(\frac{x - s}{R}) \nabla f \|_{L^{2}} \leq \| \chi(\frac{x - s}{R}) \nabla v_{s} \|_{L^{2}} + \frac{1}{C(\eta)} \| \chi'(\frac{x - x(t)}{C(\eta)}) v_{s} \|_{L^{2}} \leq \| \chi(\frac{x - s}{R}) \nabla v_{s} \|_{L^{2}} + \frac{\eta}{C(\eta)}.
\end{equation}

Therefore, since $\frac{1}{R^{2}} \int (\int \chi^{2}(\frac{y - s}{R}) |v_{s}(t,y)|^{2} dy) ds \lesssim \| u \|_{L^{2}}^{2}$,
\begin{equation}\label{3.17}
\aligned
2 \int_{0}^{T} \frac{1}{R} \int (\int \chi^{2}(\frac{y - s}{R}) |v_{s}(t,y)|^{2}) (\int \chi^{2}(\frac{x - s}{R}) |\nabla (v_{s})(t,x)|^{2} dx) ds dt \\
- \int_{0}^{T} \frac{1}{R} \int (\int \chi^{2}(\frac{y - s}{R}) |v_{s}(t,y)|^{2}) (\int_{|x - x(t)| \leq 2C(\eta)} \chi^{2}(\frac{x - s}{R}) |v_{s}(t,x)|^{4} dx) ds dt \\ \lesssim R o(T) + \eta T + \frac{C(\eta)}{R} T + \frac{\eta^{4}}{R^{2}} T + \frac{\eta^{2}}{R^{2}} \int_{0}^{T} \int (\int \chi^{2}(\frac{y - s}{R}) |v_{s}|^{2} dy) (\int \chi^{2}(\frac{x - s}{R}) |\nabla (v_{s})|^{2} dx) ds dt.
\endaligned
\end{equation}
When $\eta > 0$ is sufficiently small,
\begin{equation}\label{3.26}
 \frac{\eta^{2}}{R^{2}} \int_{0}^{T} \int (\int \chi^{2}(\frac{y - s}{R}) |v_{s}|^{2} dy) (\int \chi^{2}(\frac{x - s}{R}) |\nabla (v_{s})|^{2} dx) ds dt
 \end{equation}
 can be absorbed into the left hand side of $(\ref{3.17})$, proving that
 \begin{equation}\label{3.27}
\aligned
2 \int_{0}^{T} \frac{1}{R} \int (\int \chi^{2}(\frac{y - s}{R}) |v_{s}(t,y)|^{2}) (\int \chi^{2}(\frac{x - s}{R}) |\nabla (v_{s})(t,x)|^{2} dx) ds dt \\
- \int_{0}^{T} \frac{1}{R} \int (\int \chi^{2}(\frac{y - s}{R}) |v_{s}(t,y)|^{2}) (\int_{|x - x(t)| \leq 2C(\eta)} \chi^{2}(\frac{x - s}{R}) |v_{s}(t,x)|^{4} dx) ds dt \\ \lesssim R o(T) + \eta T + \frac{C(\eta)}{R} T+  \frac{\eta^{4}}{R^{2}} T \\ + \frac{ \eta^{2}}{R^{2}} \int_{0}^{T} \int (\int \chi^{2}(\frac{y - s}{R}) |v_{s}(t,y)|^{2} dy) (\int_{|x - x(t)| \leq 2C(\eta)} \chi^{2}(\frac{x - s}{R}) |v_{s}(t,x)|^{4} dx) ds dt.
\endaligned
\end{equation}

Now choose $|x_{\ast} - x(t)| \leq 4 C(\eta)$ such that
\begin{equation}\label{3.17.1}
\chi(\frac{x_{\ast} - s}{R}) = \inf_{|x - x(t)| \leq 4 C(\eta)} \chi(\frac{x - s}{R}).
\end{equation}
As in \cite{dodson20202}, the fundamental theorem of calculus implies that for $|x - x(t)| \leq 2 C(\eta)$,
\begin{equation}\label{3.18}
\chi^{2}(\frac{x - s}{R}) = \chi^{2}(\frac{x_{\ast} - s}{R}) + O(\frac{C(\eta)}{R}).
\end{equation}
Therefore,
\begin{equation}\label{3.20}
\aligned
\frac{1}{R^{2}} \int (\int \chi^{2}(\frac{y - s}{R}) |v_{s}(t,y)|^{2}) (\int_{|x - x(t)| \leq 2C(\eta)} \chi^{2}(\frac{x - s}{R}) |v_{s}(t,x)|^{4} dx) ds \\ \leq \frac{1}{R^{2}} \int (\int \chi^{2}(\frac{y - s}{R}) |v_{s}(t,y)|^{2}) (\int_{|x - x(t)| \leq 2C(\eta)} \chi^{2}(\frac{x_{\ast} - s}{R}) |v_{s}(t,x)|^{4} dx) ds \\
+ O(\frac{C(\eta)}{R}) \frac{1}{R^{2}} \int (\int \chi^{2}(\frac{y - s}{R}) |v_{s}(t,y)|^{2}) (\int_{|x - x(t)| \leq 2C(\eta)} |v_{s}(t,x)|^{4} dx) ds \\
=  \frac{1}{R^{2}} \int (\int \chi^{2}(\frac{y - s}{R}) |v_{s}(t,y)|^{2}) (\int_{|x - x(t)| \leq 2C(\eta)} \chi^{2}(\frac{x_{\ast} - s}{R}) |v_{s}(t,x)|^{4} dx) ds + O(\frac{C(\eta)}{R} \| v \|_{L^{2}}^{2} \| v \|_{L^{4}}^{4}).
\endaligned
\end{equation}

Plugging $(\ref{3.20})$ in to $(\ref{3.27})$, and using Strichartz estimates as in $(\ref{3.13.2})$ and $(\ref{3.13.3})$,
\begin{equation}\label{3.21}
\aligned
2 \int_{0}^{T} \frac{1}{R^{2}} \int (\int \chi^{2}(\frac{y - s}{R}) |v_{s}(t,y)|^{2}) (\int \chi^{2}(\frac{x - s}{R}) |\nabla (v_{s})(t,x)|^{2} dx) ds dt \\
- \int_{0}^{T} \frac{1}{R^{2}} \int (\int \chi^{2}(\frac{y - s}{R}) |v_{s}(t,y)|^{2}) (\int_{|x - x(t)| \leq 2C(\eta)} \chi^{2}(\frac{x_{\ast} - s}{R}) |v_{s}(t,x)|^{4} dx) ds dt \\ \lesssim R o(T) + \eta T + \frac{C(\eta)}{R} T + \frac{\eta^{4}}{R^{2}} T + \frac{\eta^{2}}{R^{2}} \int_{0}^{T} \int (\int \chi^{2}(\frac{y - s}{R}) |v_{s}|^{2} dy) (\int_{|x - x(t)| \leq 2C(\eta)} \chi^{2}(\frac{x^{\ast} - s}{R}) |v_{s}|^{4} dx) ds dt .
\endaligned
\end{equation}
Since $\chi(\frac{x^{\ast} - 1}{R}) \leq 1$, $(\ref{3.13.2})$ and $(\ref{3.13.3})$ also imply
\begin{equation}
\frac{\eta^{2}}{R^{2}} \int_{0}^{T} \int (\int \chi^{2}(\frac{y - s}{R}) |v_{s}|^{2} dy) (\int_{|x - x(t)| \leq C(\eta)} \chi^{2}(\frac{x^{\ast} - s}{R}) |v_{s}|^{4} dx) ds dt \lesssim \eta^{2} T.
\end{equation}
By definition of $x_{\ast}$ and $\chi$, for $R \gg C(\eta)$,
\begin{equation}\label{3.22}
\aligned
2 \int_{0}^{T} \frac{1}{R^{2}} \int (\int \chi^{2}(\frac{y - s}{R}) |v_{s}(t,y)|^{2}) (\int \chi^{2}(\frac{x - s}{R}) |\nabla (v_{s})(t,x)|^{2} dx) ds dt \\
- \int_{0}^{T} \frac{1}{R^{2}} \int (\int \chi^{2}(\frac{y - s}{R}) |v_{s}(t,y)|^{2}) (\int_{|x - x(t)| \leq 2C(\eta)} \chi^{2}(\frac{x_{\ast} - s}{R}) |v_{s}(t,x)|^{4} dx) ds dt \\
\geq 2 \int_{0}^{T} \frac{1}{R} \int (\int \chi^{2}(\frac{y - s}{R}) |v_{s}(t,y)|^{2}) ( \chi^{2}(\frac{x_{\ast} - s}{R}) \int \chi^{2}(\frac{x - x(t)}{R}) |\nabla (v_{s})(t,x)|^{2} dx) ds dt \\
- \int_{0}^{T} \frac{1}{R} \int (\int \chi^{2}(\frac{y - s}{R}) |v_{s}(t,y)|^{2}) (\chi^{2}(\frac{x_{\ast} - s}{R})\int \chi^{4}(\frac{x - x(t)}{R}) |v_{s}(t,x)|^{4} dx) ds dt.
\endaligned
\end{equation}
Integrating by parts,
\begin{equation}\label{3.23}
\int \chi^{2}(\frac{x - x(t)}{R}) |\nabla (v_{s})|^{2} dx = \int |\nabla(\chi(\frac{x - x(t)}{R}) v_{s})|^{2} dx + \frac{1}{R^{2}} \int \chi''(\frac{x - x(t)}{R}) \chi(\frac{x - x(t)}{R}) |v_{s}|^{2} dx.
\end{equation}
Therefore,
\begin{equation}\label{3.24}
\aligned
2 \int_{0}^{T} \frac{1}{R^{2}} \int (\int \chi^{2}(\frac{y - s}{R}) |v_{s}(t,y)|^{2}) ( \chi^{2}(\frac{x_{\ast} - s}{R}) \int \chi^{2}(\frac{x - x(t)}{R}) |\nabla(v_{s})(t,x)|^{2} dx) ds dt \\
- \int_{0}^{T} \frac{1}{R^{2}} \int (\int \chi^{2}(\frac{y - s}{R}) |v_{s}(t,y)|^{2}) (\chi^{2}(\frac{x_{\ast} - s}{R})\int \chi^{4}(\frac{x - x(t)}{R}) |v_{s}(t,x)|^{4} dx) ds dt \\
= 4 \int_{0}^{T} \frac{1}{R^{2}} \int (\int \chi^{2}(\frac{y - s}{R}) |v_{s}(t, y)|^{2} dy) \chi^{2}(\frac{x_{\ast} - s}{R}) E(\chi^{2}(\frac{x - x(t)}{R}) v) ds dt + O(\frac{T}{R^{2}}).
\endaligned
\end{equation}
Here $E$ is the energy given by $(\ref{1.7})$. Therefore, we have finally proved
\begin{equation}\label{3.25}
\aligned
4 \int_{0}^{T} \frac{1}{R^{2}} \int (\int \chi^{2}(\frac{y - s}{R}) |v_{s}(t, y)|^{2} dy) \chi^{2}(\frac{x_{\ast} - s}{R}) E(\chi^{2}(\frac{x - x(t)}{R}) v) ds dt \lesssim R o(T) + \eta T + \frac{C(\eta)}{R} T + \frac{\eta^{4}}{R^{2}} T.
\endaligned
\end{equation}

Choosing $R \nearrow \infty$ perhaps very slowly as $T \nearrow \infty$, and then $\eta \searrow 0$ sufficiently slowly, the right hand side of $(\ref{3.25})$ is bounded by $o(T)$.\medskip

On the other hand, when $|s - x(t)| \leq \frac{R}{2}$, $\chi(\frac{x_{\ast} - s}{R}) = 1$ and
\begin{equation}\label{3.28}
(\int \chi^{2}(\frac{y - s}{R}) |v_{s}(t, y)|^{2} dy) \geq \frac{1}{2} \| v \|_{L^{2}}^{2}.
\end{equation}
Therefore, since the Gagliardo--Nirenberg inequality guarantees that $E(u) \geq 0$ when $\| u \|_{L^{2}} \leq \| Q \|_{L^{2}}$, the left hand side of $(\ref{3.25})$ is bounded below by
\begin{equation}\label{3.29}
\| u_{0} \|_{L^{2}}^{2} \int_{0}^{T} \frac{1}{R^{2}} \int_{|s - x(t)| \leq \frac{R}{2}} E(\chi(\frac{x - x(t)}{R}) v_{s}) ds dt \lesssim o(T)
\end{equation}
Thus, taking a sequence $T_{n} \nearrow \infty$, $R_{n} \nearrow \infty$, $\eta_{n} \searrow 0$, there exists a sequence of times $t_{n} \in [\frac{T_{n}}{2}, T_{n}]$, $|s_{n} - x(t_{n})| \leq \frac{R_{n}}{2}$ such that
\begin{equation}\label{3.30}
E(\chi(\frac{x - s_{n}}{R_{n}}) e^{ix \cdot \xi(s_{n})} e^{i \gamma(s_{n})} P_{\leq T_{n}} u(t_{n}, x)) \rightarrow 0,
\end{equation}
\begin{equation}\label{3.31}
(1 - \chi(\frac{x - s_{n}}{R_{n}})) e^{ix \cdot \xi(s_{n})} e^{i \gamma(s_{n})} P_{\leq T_{n}} u(t_{n}, x) \rightarrow 0, \qquad \text{in} \qquad L^{2},
\end{equation}
\begin{equation}\label{3.31.1}
 (1 - P_{\geq T_{n}}) u(t_{n}, x) \rightarrow 0, \qquad \text{in} \qquad L^{2},
 \end{equation}
and
\begin{equation}\label{3.32}
\| \chi(\frac{x - s_{n}}{R_{n}}) e^{ix \cdot \xi(s_{n})} e^{i \gamma(s_{n})} P_{\leq T_{n}} u(t_{n}, x) \|_{L^{4}} \sim 1.
\end{equation}

Now by the almost periodicity of $u$, $(\ref{2.8})$, after passing to a subsequence, there exists $u_{0} \in H^{1}$ such that
\begin{equation}\label{3.33}
\chi(\frac{x + x(t_{n}) - s_{n}}{R_{n}}) e^{ix \cdot \xi(s_{n})} e^{i x(t_{n}) \cdot \xi(s_{n})} e^{i \gamma(s_{n})} P_{\leq T_{n}} u(t_{n}, x + x(t_{n})) \rightharpoonup u_{0},
\end{equation}
weakly in $H^{1}$, and
\begin{equation}\label{3.34}
\chi(\frac{x + x(t_{n}) - s_{n}}{R_{n}}) e^{ix \cdot \xi(s_{n})} e^{i x(t_{n}) \cdot \xi(s_{n})} P_{\leq T_{n}} u(t_{n}, x + x(t_{n})) \rightarrow u_{0},
\end{equation}
strongly in in $L^{2} \cap L^{4}$. Also, by $(\ref{3.30})$, $(\ref{3.31})$, and $(\ref{3.31.1})$, $\| u_{0} \|_{L^{2}} = \| Q \|_{L^{2}}$, $E(u_{0}) \leq 0$, and by the Gagliardo-Nirenberg inequality, $E(u_{0}) = 0$. Therefore,
\begin{equation}\label{3.36}
u_{0} = \lambda Q(\lambda(x - x_{0})),
\end{equation}
for some $\lambda \sim 1$ and $|x_{0}| \lesssim 1$. This proves Theorem $\ref{t2.2}$ when $\lambda(t) = 1$. $\Box$

\section{Proof of Theorem $\ref{t2.2}$ when $\lambda(t) = 1$ and $d \geq 3$}
The proof of Theorem $\ref{t2.2}$ when $\lambda(t) = 1$ in higher dimensions is quite similar to the proof in two dimensions. In this case as well, use the interaction Morawetz estimate
\begin{equation}\label{6.1}
M(t) = \int \int |Iu(t, y)|^{2} Im[\bar{Iu} \nabla Iu](t,x) \cdot (x - y) \psi(x - y) dx dy,
\end{equation}
where $I$ is the Fourier truncation operator $P_{\leq T}$, $T = 2^{k}$, $k \in \mathbb{Z}_{\geq 0}$. Again let
\begin{equation}\label{6.2}
\psi(x) = \frac{1}{|x - y|} \int_{0}^{|x - y|} \phi(s) ds,
\end{equation}
where $\phi(|x|)$ is a radial function given by
\begin{equation}\label{6.3}
\phi(|x - y|) = \frac{1}{R^{d}} \int \chi^{2}(\frac{x - y - s}{R}) \chi^{2}(\frac{s}{R}) ds = \frac{1}{R^{d}} \int \chi^{2}(\frac{x - s}{R}) \chi^{2}(\frac{s - y}{R}) ds = \frac{1}{R^{d}} \int \chi^{2}(\frac{x - s}{R}) \chi^{2}(\frac{y - s}{R}) ds,
\end{equation}
where once again $\chi$ is a radial, smooth, compactly supported function, $\chi(x) = 1$ for $|x| \leq 1$, $\chi(x)$ is supported on $|x| \leq 2$, and $\chi(|x|)$ is decreasing as a function of the radius.

By direct computation,
\begin{equation}\label{6.4}
\aligned
\frac{d}{dt} M(t) = 2 \int \int |Iu(t,y)|^{2} Re[\partial_{j} \bar{Iu} \partial_{k} Iu](t,x) [\delta_{jk} \psi(x - y) + \frac{(x - y)_{j} (x - y)_{k}}{|x - y|} \psi'(x - y)] dx dy \\ -2 \int \int Im[\bar{Iu} \partial_{k} Iu](t,y) Im[\bar{Iu} \partial_{j} Iu](t,x) [\delta_{jk} \psi(x - y) + \frac{(x - y)_{j} (x - y)_{k}}{|x - y|} \psi'(x - y)] dx dy \\
+ \frac{1}{2} \int \int |Iu(t,y)|^{2} |Iu(t,y)|^{2} [\Delta \phi(x - y) + (d - 1) \Delta \psi(x - y)] dx dy \\
- \frac{2}{d + 2} \int \int |Iu(t,y)|^{2} |Iu(t,x)|^{2 + \frac{4}{d}} [d \psi(x - y) + \psi'(x - y) |x - y|] dx dy + \mathcal E,
\endaligned
\end{equation}
where $\mathcal E$ are the error terms arising from $\mathcal N$,
\begin{equation}\label{6.5}
i Iu_{t} + \Delta I u + F(Iu) = F(Iu) - I F(u) = \mathcal N.
\end{equation}

As in the previous section, it is known from \cite{dodson2012global} that
\begin{equation}\label{6.6}
\int_{0}^{T} \mathcal N dt \lesssim R o(T),
\end{equation}
and
\begin{equation}\label{6.7}
\sup_{t \in [0, T]} |M(t)| \lesssim R o(T).
\end{equation}
Therefore, choosing $R \nearrow \infty$ sufficiently slowly,
\begin{equation}\label{6.8}
\lim_{T \rightarrow \infty} \frac{R o(T)}{T} = 0.
\end{equation}

The computations in $(\ref{3.5})$--$(\ref{3.8})$ can easily be generalized to higher dimensions. Indeed,
\begin{equation}\label{6.9}
\phi(x) = \frac{1}{R^{d}} \int \chi^{2}(\frac{x - s}{R}) \chi^{2}(\frac{s}{R}) ds \sim 1,
\end{equation}
for $|x| \leq R$, $\phi(x)$ is supported on the set $|x| \leq 4R$, and $\phi(x)$ is a radially symmetric function that is decreasing as $|x| \rightarrow \infty$. Therefore, $(\ref{6.2})$ implies that
\begin{equation}\label{6.10}
|\psi(x)| \lesssim \frac{R}{|x|}, \qquad \text{for all} \qquad x \in \mathbb{R}^{d}.
\end{equation}
Also, by direct computation,
\begin{equation}\label{6.11}
\Delta \phi(x) = \frac{1}{R^{d}} \int \Delta \chi^{2}(\frac{x - s}{R}) \chi^{2}(\frac{s}{R}) ds \lesssim \frac{1}{R^{2}}.
\end{equation}
Next, by the same calculations that give $(\ref{6.10})$,
\begin{equation}\label{6.12}
\Delta \psi(x) \lesssim \frac{R}{|x|^{3}},
\end{equation}
so $|\Delta \psi(x)| \lesssim \frac{1}{R^{2}}$ for $|x| \gtrsim R$. Also,
\begin{equation}\label{6.13}
\psi(r) = \phi(0) + \frac{1}{r} \int_{0}^{r} \int_{0}^{s} (s - t) \phi''(t) dt ds,
\end{equation}
so by $(\ref{6.11})$, $|\Delta \psi(x)| \lesssim \frac{1}{R^{2}}$ for $|x| \lesssim R$.
Therefore,
\begin{equation}\label{6.14}
 \frac{1}{2} \int \int |Iu(t,y)|^{2} |Iu(t,y)|^{2} [\Delta \phi(x - y) + \Delta \psi(x - y)] dx dy \lesssim \frac{1}{R^{2}} \| u \|_{L^{2}}^{4}.
 \end{equation}

Following the case when $d = 2$,
\begin{equation}\label{6.15}
\delta_{jk} \psi(x - y) + \frac{(x - y)_{j} (x - y)_{k}}{|x - y|} \psi'(x - y) = \delta_{jk} \phi(x - y) - \delta_{jk} |x - y| \psi'(|x - y|) + \frac{(x - y)_{j} (x - y)_{k}}{|x - y|} \psi'(x - y),
\end{equation}
and
\begin{equation}\label{6.16}
\aligned
-\int \int Im[\bar{Iu} \partial_{k} Iu] Im[\bar{Iu} \partial_{j} Iu_{x}] \delta_{jk} \phi(x - y) dx dy + \int \int |Iu(t,y)|^{2} |\nabla Iu(t,x)|^{2} \phi(x - y) dx dy \\
= \frac{1}{R^{d}} \int (\int \chi^{2}(\frac{y - s}{R}) Im[\bar{Iu} \partial_{j} Iu] dy)(\int \chi^{2}(\frac{x - s}{R}) Im[\bar{Iu} \partial_{j} Iu] dx) ds \\ + \frac{1}{R^{d}} \int (\int \chi^{2}(\frac{y - s}{R}) |Iu(t,y)|^{2} dy)(\int \chi^{2}(\frac{x - s}{R}) |\nabla Iu(t,x)|^{2} dx) ds,
\endaligned
\end{equation}
so for a fixed $s \in \mathbb{R}^{d}$, for any $\xi \in \mathbb{R}^{d}$ and $j \in \mathbb{Z}$,
\begin{equation}\label{6.17}
\int \chi^{2}(\frac{y - s}{R}) Im[\overline{e^{iy \xi} Iu} \partial_{j}(e^{iy \xi} Iu)] dy = \int \chi^{2}(\frac{y - s}{R}) Im[\bar{Iu} \partial_{j} Iu] dy + \xi_{j} \int \chi^{2}(\frac{y - s}{R}) |Iu(t,y)|^{2} dy,
\end{equation}
and
\begin{equation}\label{6.18}
\aligned
\int \chi^{2}(\frac{x - s}{R}) |\partial_{j} (e^{ix \xi} Iu)|^{2} dx = \xi_{j}^{2} \int \chi^{2}(\frac{x - s}{R}) |Iu|^{2} dx \\ + 2 \xi_{j} \int \chi^{2}(\frac{x - s}{R}) Im[\bar{Iu} \partial_{j} Iu] dx + \int \chi^{2}(\frac{x - s}{R}) |\partial_{j} Iu|^{2} dx.
\endaligned
\end{equation}
So it is again convenient to choose $\xi(s) \in \mathbb{R}^{d}$ such that the $(\ref{6.17}) = 0$ for any $j \in \mathbb{Z}$. Set
\begin{equation}\label{6.19}
v_{s} = e^{ix \xi(s)} Iu.
\end{equation}
By the fundamental theorem of calculus,
\begin{equation}\label{6.20}
\aligned
2 \int_{0}^{T} \frac{1}{R^{d}} \int (\int \chi^{2}(\frac{y - s}{R}) |v_{s}(t,y)|^{2}) (\int \chi^{2}(\frac{x - s}{R}) |\nabla (v_{s})(t,x)|^{2} dx) ds dt \\
2 \int \int |Iu(t,y)|^{2} Re[\partial_{j} \bar{Iu} \partial_{k} Iu](t,x) [\delta_{jk} |x - y| \psi'(x - y) + \frac{(x - y)_{j} (x - y)_{k}}{|x - y|} \psi'(x - y)] dx dy \\ -2 \int \int Im[\bar{Iu} \partial_{k} Iu](t,y) Im[\bar{Iu} \partial_{j} Iu](t,x) [\delta_{jk} |x - y| \psi'(x - y) + \frac{(x - y)_{j} (x - y)_{k}}{|x - y|} \psi'(x - y)] dx dy \\
- \frac{d}{d + 2} \int_{0}^{T} \frac{1}{R^{d}} \int (\int \chi^{2}(\frac{y - s}{R}) |v_{s}(t,y)|^{2}) (\int \chi^{2}(\frac{x - s}{R}) |v_{s}(t,x)|^{\frac{2(d + 2)}{d}} dx) ds dt \\
- \frac{2(d - 1)}{d + 2} \int_{0}^{T} \int |Iu(t,y)|^{2} [\psi(x - y) - \phi(x - y)] |Iu(t,x)|^{\frac{2(d + 2)}{d}} dx dy dt \lesssim R o(T) + \frac{1}{R^{2}} T.
\endaligned
\end{equation}
Again following \cite{dodson2015global} in higher dimensions,
\begin{equation}\label{6.21}
\aligned
2 \int \int |Iu(t,y)|^{2} Re[\partial_{j} \bar{Iu} \partial_{k} Iu](t,x) [\delta_{jk} |x - y| \psi'(x - y) + \frac{(x - y)_{j} (x - y)_{k}}{|x - y|} \psi'(x - y)] dx dy \\ -2 \int \int Im[\bar{Iu} \partial_{k} Iu](t,y) Im[\bar{Iu} \partial_{j} Iu](t,x) [\delta_{jk} |x - y| \psi'(x - y) + \frac{(x - y)_{j} (x - y)_{k}}{|x - y|} \psi'(x - y)] dx dy \geq 0.
\endaligned
\end{equation}
Therefore,
\begin{equation}\label{6.22}
\aligned
2 \int_{0}^{T} \frac{1}{R^{d}} \int (\int \chi^{2}(\frac{y - s}{R}) |v_{s}(t,y)|^{2}) (\int \chi^{2}(\frac{x - s}{R}) |\nabla (v_{s})(t,x)|^{2} dx) ds dt \\
- \frac{2d}{d + 2} \int_{0}^{T} \frac{1}{R^{d}} \int (\int \chi^{2}(\frac{y - s}{R}) |v_{s}(t,y)|^{2}) (\int \chi^{2}(\frac{x - s}{R}) |v_{s}(t,x)|^{2 + \frac{4}{d}} dx) ds dt \\
- \frac{2(d - 1)}{d + 2} \int_{0}^{T} \int |Iu(t,y)|^{2} [\psi(x - y) - \phi(x - y)] |Iu(t,x)|^{2 + \frac{4}{d}} dx dy dt \lesssim R o(T) + \frac{1}{R^{2}} T.
\endaligned
\end{equation}

Again by the Arzela--Ascoli theorem, H{\"o}lder's inequality, Strichartz estimates, and $\lambda(t) = 1$,
\begin{equation}\label{6.23}
\aligned
\int_{a}^{a + 1} \int_{|y - y(t)| \geq C(\eta)} |Iu(t,y)|^{2} \int |Iu(t,x)|^{2 + \frac{4}{d}} dx dy dt + \int_{a}^{a + 1} \int |Iu(t,y)|^{2} \int_{|x - x(t)| \geq C(\eta)} |Iu(t,x)|^{2 + \frac{4}{d}} dx dy dt \\ \lesssim \eta^{2} \| u \|_{L_{t,x}^{2 + \frac{4}{d}}([a, a + 1] \times \mathbb{R}^{d})}^{2 + \frac{4}{d}} + \eta^{\frac{4}{d}} \| u \|_{L_{t}^{2} L_{x}^{\frac{2d}{d + 2}}([a, a + 1] \times \mathbb{R}^{d})}^{2}.
\endaligned
\end{equation}
Finally, by $(\ref{6.2})$ and the fundamental theorem of calculus,
\begin{equation}\label{6.24}
\int_{a}^{a + 1} \int_{|y - y(t)| \leq C(\eta)} \int_{|x - x(t)| \leq C(\eta)} |Iu(t,y)|^{2} [\psi(x - y) - \phi(x - y)] |Iu(t,x)|^{2 + \frac{4}{d}} dx dy dt \lesssim \frac{C(\eta)}{R} \| u \|_{L_{t,x}^{2 + \frac{4}{d}}([a, a + 1] \times \mathbb{R}^{2})}^{2 + \frac{4}{d}}.
\end{equation}
Therefore, letting $\sigma = \inf \{ 1, \frac{4}{d} \}$,
\begin{equation}\label{6.25}
\int_{0}^{T} \int |Iu(t,y)|^{2} [\psi(x - y) - \phi(x - y)] |Iu(t,x)|^{2 + \frac{4}{d}} dx dy dt \lesssim \eta^{\sigma} T + \frac{C(\eta)}{R} T,
\end{equation}
so
\begin{equation}\label{6.26}
\aligned
2 \int_{0}^{T} \frac{1}{R^{d}} \int (\int \chi^{2}(\frac{y - s}{R}) |v_{s}(t,y)|^{2}) (\int \chi^{2}(\frac{x - s}{R}) |\nabla (v_{s})(t,x)|^{2} dx) ds dt \\
- \frac{2}{d + 2} \int_{0}^{T} \frac{1}{R^{d}} \int (\int \chi^{2}(\frac{y - s}{R}) |v_{s}(t,y)|^{2}) (\int \chi^{2}(\frac{x - s}{R}) |v_{s}(t,x)|^{\frac{2(d + 2)}{d}} dx) ds dt \lesssim R o(T) + \eta^{\sigma} T + \frac{C(\eta)}{R} T.
\endaligned
\end{equation}

In dimensions $d \geq 3$, we will use the Sobolev embedding theorem, as in \cite{merle1993determination}, to control $|u(t,x)|^{2 + \frac{4}{d}}$ far away from $x(t)$. By the product rule, for any $f \in H^{1}(\mathbb{R}^{d})$,
\begin{equation}\label{6.27}
\aligned
\int \chi(\frac{x - s}{R})^{2} |f(x)|^{2 + \frac{4}{d}} dx \lesssim \| \chi(\frac{x - s}{R}) f \|_{L^{\frac{2d}{d + 2}}(\mathbb{R}^{d})}^{2} \| f \|_{L^{2}(\mathbb{R}^{d})}^{\frac{4}{d}} \lesssim \| \nabla(\chi(\frac{x - s}{R}) f) \|_{L^{2}}^{2} \| f \|_{L^{2}}^{\frac{4}{d}} \\
\lesssim \| f \|_{L^{2}}^{\frac{4}{d}} (\| \chi(\frac{x - s}{R}) \nabla f \|_{L^{2}}^{2} + \frac{1}{R^{2}} \| f \|_{L^{2}}^{2}).
\endaligned
\end{equation}

For a fixed $t$, let $f = (1 - \chi(\frac{x - x(t)}{C(\eta)})) v_{s}(t,x)$. By $(\ref{3.13.1})$, $\| f \|_{L^{2}} \leq \eta$, and by the product rule,
\begin{equation}\label{6.28}
\| \chi(\frac{x - s}{R}) \nabla f \|_{L^{2}} \leq \| \chi(\frac{x - s}{R}) \nabla v_{s} \|_{L^{2}} + \frac{1}{C(\eta)} \| \chi'(\frac{x - x(t)}{C(\eta)}) v_{s} \|_{L^{2}} \leq \| \chi(\frac{x - s}{R}) \nabla v_{s} \|_{L^{2}} + \frac{\eta}{C(\eta)}.
\end{equation}

Therefore, since $\frac{1}{R^{d}} \int (\int \chi^{2}(\frac{y - s}{R}) |v_{s}(t,y)|^{2} dy) ds \lesssim \| u \|_{L^{2}}^{2}$,
\begin{equation}\label{6.29}
\aligned
2 \int_{0}^{T} \frac{1}{R^{d}} \int (\int \chi^{2}(\frac{y - s}{R}) |v_{s}(t,y)|^{2}) (\int \chi^{2}(\frac{x - s}{R}) |\nabla (v_{s})(t,x)|^{2} dx) ds dt \\
- \frac{d}{d + 2} \int_{0}^{T} \frac{1}{R^{d}} \int (\int \chi^{2}(\frac{y - s}{R}) |v_{s}(t,y)|^{2}) (\int_{|x - x(t)| \leq 2C(\eta)} \chi^{2}(\frac{x - s}{R}) |v_{s}(t,x)|^{2 + \frac{4}{d}} dx) ds dt \\ \lesssim R o(T) + \eta^{\sigma} T + \frac{C(\eta)}{R} T + \frac{\eta^{2 + \frac{4}{d}}}{R^{2}} T + \frac{\eta^{2}}{R^{2}} \int_{0}^{T} \int (\int \chi^{2}(\frac{y - s}{R}) |v_{s}|^{2} dy) (\int \chi^{2}(\frac{x - s}{R}) |\nabla (v_{s})|^{2} dx) ds dt.
\endaligned
\end{equation}
When $\eta > 0$ is sufficiently small,
\begin{equation}\label{6.30}
 \frac{\eta^{2}}{R^{2}} \int_{0}^{T} \int (\int \chi^{2}(\frac{y - s}{R}) |v_{s}|^{2} dy) (\int \chi^{2}(\frac{x - s}{R}) |\nabla (v_{s})|^{2} dx) ds dt
 \end{equation}
 can be absorbed into the left hand side of $(\ref{6.29})$, proving that
 \begin{equation}\label{6.31}
\aligned
2 \int_{0}^{T} \frac{1}{R} \int (\int \chi^{2}(\frac{y - s}{R}) |v_{s}(t,y)|^{2}) (\int \chi^{2}(\frac{x - s}{R}) |\nabla (v_{s})(t,x)|^{2} dx) ds dt \\
- \frac{d}{d + 2} \int_{0}^{T} \frac{1}{R} \int (\int \chi^{2}(\frac{y - s}{R}) |v_{s}(t,y)|^{2}) (\int_{|x - x(t)| \leq 2C(\eta)} \chi^{2}(\frac{x - s}{R}) |v_{s}(t,x)|^{2 + \frac{4}{d}} dx) ds dt \\ \lesssim R o(T) + \eta^{\sigma} T + \frac{C(\eta)}{R} T+  \frac{\eta^{2 + \frac{4}{d}}}{R^{2}} T \\ + \frac{ \eta^{2}}{R^{2}} \int_{0}^{T} \int (\int \chi^{2}(\frac{y - s}{R}) |v_{s}(t,y)|^{2} dy) (\int_{|x - x(t)| \leq 2C(\eta)} \chi^{2}(\frac{x - s}{R}) |v_{s}(t,x)|^{2 + \frac{4}{d}} dx) ds dt.
\endaligned
\end{equation}

The rest of the argument is identical to the $d = 2$ case. Choose $|x_{\ast} - x(t)| \leq 4 C(\eta)$ such that
\begin{equation}\label{6.32}
\chi(\frac{x_{\ast} - s}{R}) = \inf_{|x - x(t)| \leq 4 C(\eta)} \chi(\frac{x - s}{R}).
\end{equation}
For $|x - x(t)| \leq 2 C(\eta)$,
\begin{equation}\label{6.33}
\chi^{2}(\frac{x - s}{R}) = \chi^{2}(\frac{x_{\ast} - s}{R}) + O(\frac{C(\eta)}{R}).
\end{equation}
Therefore,
\begin{equation}\label{6.34}
\aligned
\frac{1}{R^{d}} \int (\int \chi^{2}(\frac{y - s}{R}) |v_{s}(t,y)|^{2}) (\int_{|x - x(t)| \leq 2C(\eta)} \chi^{2}(\frac{x - s}{R}) |v_{s}(t,x)|^{2 + \frac{4}{d}} dx) ds \\ \leq \frac{1}{R^{d}} \int (\int \chi^{2}(\frac{y - s}{R}) |v_{s}(t,y)|^{2}) (\int_{|x - x(t)| \leq 2C(\eta)} \chi^{2}(\frac{x_{\ast} - s}{R}) |v_{s}(t,x)|^{2 + \frac{4}{d}} dx) ds \\
+ O(\frac{C(\eta)}{R}) \frac{1}{R^{d}} \int (\int \chi^{2}(\frac{y - s}{R}) |v_{s}(t,y)|^{2}) (\int_{|x - x(t)| \leq 2C(\eta)} |v_{s}(t,x)|^{2 + \frac{4}{d}} dx) ds \\
=  \frac{1}{R^{d}} \int (\int \chi^{2}(\frac{y - s}{R}) |v_{s}(t,y)|^{2}) (\int_{|x - x(t)| \leq 2C(\eta)} \chi^{2}(\frac{x_{\ast} - s}{R}) |v_{s}(t,x)|^{2 + \frac{4}{d}} dx) ds + O(\frac{C(\eta)}{R} \| v \|_{L^{2}}^{2} \| v \|_{L^{2 + \frac{4}{d}}}^{2 + \frac{4}{d}}).
\endaligned
\end{equation}

Again using Strichartz estimates,
\begin{equation}\label{6.35}
\aligned
2 \int_{0}^{T} \frac{1}{R^{d}} \int (\int \chi^{2}(\frac{y - s}{R}) |v_{s}(t,y)|^{2}) (\int \chi^{2}(\frac{x - s}{R}) |\nabla (v_{s})(t,x)|^{2} dx) ds dt \\
- \frac{d}{d + 2} \int_{0}^{T} \frac{1}{R^{d}} \int (\int \chi^{2}(\frac{y - s}{R}) |v_{s}(t,y)|^{2}) (\int_{|x - x(t)| \leq 2C(\eta)} \chi^{2}(\frac{x_{\ast} - s}{R}) |v_{s}(t,x)|^{2 + \frac{4}{d}} dx) ds dt \\ \lesssim R o(T) + \eta^{\sigma} T + \frac{C(\eta)}{R} T + \frac{\eta^{2 + \frac{4}{d}}}{R^{2}} T \\ + \frac{\eta^{2}}{R^{d}} \int_{0}^{T} \int (\int \chi^{2}(\frac{y - s}{R}) |v_{s}|^{2} dy) (\int_{|x - x(t)| \leq 2C(\eta)} \chi^{2}(\frac{x^{\ast} - s}{R}) |v_{s}|^{2 + \frac{4}{d}} dx) ds dt,
\endaligned
\end{equation}
and
\begin{equation}\label{6.36}
\frac{\eta^{2}}{R^{d}} \int_{0}^{T} \int (\int \chi^{2}(\frac{y - s}{R}) |v_{s}|^{2} dy) (\int_{|x - x(t)| \leq C(\eta)} \chi^{2}(\frac{x^{\ast} - s}{R}) |v_{s}|^{2 + \frac{4}{d}} dx) ds dt \lesssim \eta^{2} T.
\end{equation}
By definition of $x_{\ast}$ and $\chi$, for $R \gg C(\eta)$,
\begin{equation}\label{6.37}
\aligned
2 \int_{0}^{T} \frac{1}{R^{d}} \int (\int \chi^{2}(\frac{y - s}{R}) |v_{s}(t,y)|^{2}) (\int \chi^{2}(\frac{x - s}{R}) |\nabla (v_{s})(t,x)|^{2} dx) ds dt \\
- \frac{d}{d + 2} \int_{0}^{T} \frac{1}{R^{d}} \int (\int \chi^{2}(\frac{y - s}{R}) |v_{s}(t,y)|^{2}) (\int_{|x - x(t)| \leq 2C(\eta)} \chi^{2}(\frac{x_{\ast} - s}{R}) |v_{s}(t,x)|^{2 + \frac{4}{d}} dx) ds dt \\
\geq 2 \int_{0}^{T} \frac{1}{R^{d}} \int (\int \chi^{2}(\frac{y - s}{R}) |v_{s}(t,y)|^{2}) ( \chi^{2}(\frac{x_{\ast} - s}{R}) \int \chi^{2}(\frac{x - x(t)}{R}) |\nabla (v_{s})(t,x)|^{2} dx) ds dt \\
- \frac{d}{d + 2} \int_{0}^{T} \frac{1}{R^{d}} \int (\int \chi^{2}(\frac{y - s}{R}) |v_{s}(t,y)|^{2}) (\chi^{2}(\frac{x_{\ast} - s}{R})\int \chi^{2 + \frac{4}{d}}(\frac{x - x(t)}{R}) |v_{s}(t,x)|^{2 + \frac{4}{d}} dx) ds dt.
\endaligned
\end{equation}
Integrating by parts,
\begin{equation}\label{6.38}
\int \chi^{2}(\frac{x - x(t)}{R}) |\nabla (v_{s})|^{2} dx = \int |\nabla(\chi(\frac{x - x(t)}{R}) v_{s})|^{2} dx + \frac{1}{R^{2}} \int \chi''(\frac{x - x(t)}{R}) \chi(\frac{x - x(t)}{R}) |v_{s}|^{2} dx.
\end{equation}
Therefore,
\begin{equation}\label{6.39}
\aligned
2 \int_{0}^{T} \frac{1}{R^{d}} \int (\int \chi^{2}(\frac{y - s}{R}) |v_{s}(t,y)|^{2}) ( \chi^{2}(\frac{x_{\ast} - s}{R}) \int \chi^{2}(\frac{x - x(t)}{R}) |\nabla(v_{s})(t,x)|^{2} dx) ds dt \\
- \frac{d}{d + 2} \int_{0}^{T} \frac{1}{R^{d}} \int (\int \chi^{2}(\frac{y - s}{R}) |v_{s}(t,y)|^{2}) (\chi^{2}(\frac{x_{\ast} - s}{R})\int \chi^{2 + \frac{4}{d}}(\frac{x - x(t)}{R}) |v_{s}(t,x)|^{2 + \frac{4}{d}} dx) ds dt \\
= 4 \int_{0}^{T} \frac{1}{R^{d}} \int (\int \chi^{2}(\frac{y - s}{R}) |v_{s}(t, y)|^{2} dy) \chi^{2}(\frac{x_{\ast} - s}{R}) E(\chi(\frac{x - x(t)}{R}) v) ds dt + O(\frac{T}{R^{2}}).
\endaligned
\end{equation}
Here $E$ is the energy given by $(\ref{1.7})$. Therefore, we have finally proved
\begin{equation}\label{6.40}
\aligned
4 \int_{0}^{T} \frac{1}{R^{d}} \int (\int \chi^{2}(\frac{y - s}{R}) |v_{s}(t, y)|^{2} dy) \chi^{2}(\frac{x_{\ast} - s}{R}) E(\chi^{2}(\frac{x - x(t)}{R}) v_{s}) ds dt \\ \lesssim R o(T) + \eta^{\sigma} T + \frac{C(\eta)}{R} T + \frac{\eta^{2 + \frac{4}{d}}}{R^{2}} T.
\endaligned
\end{equation}

Choosing $R \nearrow \infty$ perhaps very slowly as $T \nearrow \infty$, and then $\eta \searrow 0$ sufficiently slowly, the right hand side of $(\ref{6.40})$ is bounded by $o(T)$.\medskip

Again, when $|s - x(t)| \leq \frac{R}{2}$, $\chi(\frac{x_{\ast} - s}{R}) = 1$ and
\begin{equation}\label{6.41}
(\int \chi^{2}(\frac{y - s}{R}) |v_{s}(t, y)|^{2} dy) \geq \frac{1}{2} \| u \|_{L^{2}}^{2}.
\end{equation}
Therefore, since the Gagliardo--Nirenberg inequality guarantees that $E(u) \geq 0$ when $\| u \|_{L^{2}} \leq \| Q \|_{L^{2}}$, the left hand side of $(\ref{6.41})$ is bounded below by
\begin{equation}\label{6.42}
\| u_{0} \|_{L^{2}}^{2} \int_{0}^{T} \frac{1}{R^{d}} \int_{|s - x(t)| \leq \frac{R}{2}} E(\chi(\frac{x - x(t)}{R}) v_{s}) ds dt \lesssim o(T)
\end{equation}
Thus, taking a sequence $T_{n} \nearrow \infty$, $R_{n} \nearrow \infty$, $\eta_{n} \searrow 0$, there exists a sequence of times $t_{n} \in [\frac{T_{n}}{2}, T_{n}]$, $|s_{n} - x(t_{n})| \leq \frac{R_{n}}{2}$ such that
\begin{equation}\label{6.43}
E(\chi(\frac{x - s_{n}}{R_{n}}) e^{ix \cdot \xi(s_{n})} e^{i \gamma(s_{n})} P_{\leq T_{n}} u(t_{n}, x)) \rightarrow 0,
\end{equation}
\begin{equation}\label{6.44}
(1 - \chi(\frac{x - s_{n}}{R_{n}})) e^{ix \cdot \xi(s_{n})} e^{i \gamma(s_{n})} P_{\leq T_{n}} u(t_{n}, x) \rightarrow 0, \qquad \text{in} \qquad L^{2},
\end{equation}
\begin{equation}\label{6.45}
 (1 - P_{\geq T_{n}}) u(t_{n}, x) \rightarrow 0, \qquad \text{in} \qquad L^{2},
 \end{equation}
and
\begin{equation}\label{6.46}
\| \chi(\frac{x - s_{n}}{R_{n}}) e^{ix \cdot \xi(s_{n})} e^{i \gamma(s_{n})} P_{\leq T_{n}} u(t_{n}, x) \|_{L^{2 + \frac{4}{d}}} \sim 1.
\end{equation}

Now by the almost periodicity of $u$, $(\ref{2.8})$, after passing to a subsequence, there exists $u_{0} \in H^{1}$ such that
\begin{equation}\label{6.47}
\chi(\frac{x + x(t_{n}) - s_{n}}{R_{n}}) e^{ix \cdot \xi(s_{n})} e^{i x(t_{n}) \cdot \xi(s_{n})} e^{i \gamma(s_{n})} P_{\leq T_{n}} u(t_{n}, x + x(t_{n})) \rightharpoonup u_{0},
\end{equation}
weakly in $H^{1}$, and
\begin{equation}\label{6.48}
\chi(\frac{x + x(t_{n}) - s_{n}}{R_{n}}) e^{ix \cdot \xi(s_{n})} e^{i x(t_{n}) \cdot \xi(s_{n})} P_{\leq T_{n}} u(t_{n}, x + x(t_{n})) \rightarrow u_{0},
\end{equation}
strongly in in $L^{2} \cap L^{2 + \frac{4}{d}}$. Also, by $(\ref{6.43})$, $(\ref{6.44})$, and $(\ref{6.45})$, $\| u_{0} \|_{L^{2}} = \| Q \|_{L^{2}}$, $E(u_{0}) \leq 0$, and by the Gagliardo-Nirenberg inequality, $E(u_{0}) = 0$. Therefore,
\begin{equation}\label{6.49}
u_{0} = \lambda^{d/2} Q(\lambda(x - x_{0})),
\end{equation}
for some $\lambda \sim 1$ and $|x_{0}| \lesssim 1$. This proves Theorem $\ref{t2.2}$ when $\lambda(t) = 1$. $\Box$

\section{Proof of Theorem $\ref{t2.2}$ for a general $\lambda(t)$}
Now suppose that $\lambda(t)$ is free to vary. Recall that $|\lambda'(t)| \lesssim \lambda(t)^{3}$. In this case, 
\begin{equation}\label{4.1}
\lambda(t) : I \rightarrow (0, \infty),
\end{equation}
where $I$ is the maximal interval of existence of an almost periodic solution to $(\ref{1.1})$.
\begin{theorem}\label{t4.1}
Suppose $T_{n} \in I$, $T_{n} \rightarrow \sup(I)$ is a sequence of times in $I$. Then
\begin{equation}\label{4.2}
\lim_{T_{n} \rightarrow \sup(I)} \frac{1}{\sup_{t \in [0, T_{n}]} \lambda(t)} \cdot \int_{0}^{T_{n}} \lambda(t)^{3} dt = +\infty.
\end{equation}
\end{theorem}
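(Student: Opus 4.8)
The plan is to argue by contradiction and, after a scaling reduction, to reach a rigidity statement that uses only $\|u\|_{L^{2}}=\|Q\|_{L^{2}}$ and the Gagliardo--Nirenberg inequality $(\ref{1.9})$. So suppose the conclusion fails: there exist $T_{n}\to\sup(I)$ and $C<\infty$ with $\int_{0}^{T_{n}}\lambda(t)^{3}\,dt\le C\,\Lambda_{n}$, where $\Lambda_{n}:=\sup_{t\in[0,T_{n}]}\lambda(t)$ (non-decreasing in $n$). I would first record two soft consequences of the hypotheses. From $|\lambda'(t)|\lesssim\lambda(t)^{3}$ one gets $|\tfrac{d}{dt}\lambda(t)^{-2}|\lesssim 1$, so $\lambda^{-2}$ is globally Lipschitz on $I$; equivalently $\lambda$ is locally almost constant, $\lambda(t)\sim\lambda(t_{0})$ whenever $|t-t_{0}|\lesssim\lambda(t_{0})^{-2}$. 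And from almost periodicity $(\ref{2.8})$ together with compactness of $K$ one has $\int|u(t,x)|^{\frac{2(d+2)}{d}}\,dx\sim\lambda(t)^{2}$ uniformly in $t$ (for the lower bound, $K$ is a compact subset of $L^{2}$ consisting of functions of fixed positive mass, so $\inf_{w\in K}\|w\|_{L^{2(d+2)/d}}>0$). I would then split into two cases according to whether $\lambda$ is bounded on $[0,\sup(I))$.

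\emph{Bounded case.} If $\lambda\le\Lambda_{\infty}$ on $[0,\sup(I))$, then letting $n\to\infty$ gives $\int_{0}^{\sup(I)}\lambda^{3}\,dt<\infty$. Since $u(t)$ then has frequency support $\lesssim\lambda(t)\lesssim1$, it lies in $L^{\infty}_{t\ge0}H^{1}_{x}$, indeed (by the structure theory for minimal-mass blowup solutions, \cite{tao2008minimal}) in $C_{t}H^{1}_{x}$; hence there is no finite-time blowup, $\sup(I)=\infty$, and energy is conserved on $[0,\infty)$. Now $\int_{0}^{\infty}\lambda^{3}<\infty$ forces $\liminf_{t\to\infty}\lambda(t)=0$, and along a sequence $t_{k}\to\infty$ with $\lambda(t_{k})\to0$ both $\|\nabla u(t_{k})\|_{L^{2}}^{2}$ and $\|u(t_{k})\|_{L^{2(d+2)/d}}^{2(d+2)/d}$ tend to $0$, so $E(u(t_{k}))\to0$ and therefore $E(u)\equiv0$. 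Feeding $E(u(t))=0$ and $\|u(t)\|_{L^{2}}=\|Q\|_{L^{2}}$ into $(\ref{1.9})$ shows that equality holds in Gagliardo--Nirenberg at every time, so $u(t)$ is a scaling/translation/phase copy of $Q$ for each $t$; uniqueness for $(\ref{1.1})$ then forces the scaling parameter to be constant, i.e.\ $\lambda(t)\sim1$, contradicting $\liminf\lambda=0$.

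\emph{Unbounded case.} If $\lambda$ is unbounded on $[0,\sup(I))$ then $\Lambda_{n}\to\infty$; choose $\sigma_{n}\in[0,T_{n}]$ with $\lambda(\sigma_{n})=\Lambda_{n}$, so $\sigma_{n}\to\sup(I)$. First I would reduce to $\sup(I)=\infty$: a minimal-mass solution that blows up in both time directions cannot blow up in finite time, since a finite-energy minimal-mass finite-time blowup solution is (by \cite{merle1993determination}) a pseudoconformal transform of $e^{it}Q$ and therefore scatters in the other direction. Next, since $\int_{0}^{T_{n}}\lambda^{3}\le C\Lambda_{n}$ and $\lambda^{-2}$ is Lipschitz, $\lambda$ cannot stay $\gtrsim\Lambda_{n}$ for time longer than $\sim\Lambda_{n}^{-2}$ inside $[0,T_{n}]$ (nor can it be eventually monotone: the bound $dt\gtrsim d\lambda/\lambda^{3}$ together with $\sup(I)=\infty$, which precludes growth at the maximal rate, would give $\int_{0}^{T_{n}}\lambda^{3}\gg\Lambda_{n}$); so the large values occur in narrow spikes, and enlarging $T_{n}$ past one such spike's descent costs only an absolute constant in $C$. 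Then rescale: with $\mu_{n}:=\Lambda_{n}$ set $u_{n}(t,x):=\mu_{n}^{-d/2}u(\sigma_{n}+\mu_{n}^{-2}t,\mu_{n}^{-1}x)$. Each $u_{n}$ solves $(\ref{1.1})$, has mass $\|Q\|_{L^{2}}$, is almost periodic with the \emph{same} set $K$, and $\lambda_{n}(t)=\mu_{n}^{-1}\lambda(\sigma_{n}+\mu_{n}^{-2}t)$ satisfies $\lambda_{n}(0)=1$, $\sup\lambda_{n}=1$ and $\int\lambda_{n}^{3}\,dt=\mu_{n}^{-1}\int_{0}^{T_{n}}\lambda^{3}\,ds\le C$ over the rescaled time interval — which exhausts $\mathbb{R}$ as $n\to\infty$. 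Passing to a subsequence, $u_{n}\to u_{\infty}$ locally, an almost periodic solution with the same $K$, mass $\|Q\|_{L^{2}}$, $\lambda_{\infty}\le1$ globally, and $\int_{\mathbb{R}}\lambda_{\infty}^{3}<\infty$; being almost periodic and nonzero it cannot scatter (a scattering solution disperses and cannot stay in a fixed compact subset of $L^{2}$ modulo scaling), so $u_{\infty}$ is itself a minimal-mass blowup solution and falls under the bounded case, yielding the same contradiction.

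The hard part will be the unbounded case: making the rescaling rigorous — checking that the bound $\int_{0}^{T_{n}}\lambda^{3}\lesssim\Lambda_{n}$ really survives rescaling at the peaks, that the rescaled time windows can be arranged (via the ``narrow spike'' observation and the Lipschitz control of $\lambda^{-2}$) to exhaust all of $\mathbb{R}$, and that the local limit $u_{\infty}$ is again a nonzero almost-periodic solution so that the bounded-case rigidity applies to it. Propagating the non-scattering/almost-periodicity to the limit, and dispatching the finite-time-blowup alternative without simply quoting the classification, are the two points needing the most care; the bounded case itself is comparatively soft once one has the $H^{1}$ control coming from the bounded frequency scale and the Gagliardo--Nirenberg rigidity.
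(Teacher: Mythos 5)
Your overall strategy coincides with the paper's: argue by contradiction, rescale at a time where $\lambda$ attains its supremum on $[0,T_{n}]$, pass to a compactness limit to obtain an almost periodic ``rapid cascade'' solution with $\lambda \leq 1$ and $\int \lambda^{3}\,dt < \infty$ on a backward half-line, show its energy vanishes, and invoke the equality case of Gagliardo--Nirenberg to force the soliton, which violates the integral bound. (The paper does not split into bounded/unbounded cases; it goes directly to the rescaled limit, and it only needs the bound on $(-\infty,0]$, so your effort to make the rescaled windows exhaust all of $\mathbb{R}$ via the ``narrow spike'' discussion is unnecessary --- and, as you half-admit, not actually justified when $\sigma_{n}=T_{n}$.)

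The genuine gap is in your bounded-case endgame, which is the technical heart of the matter. You write that since $\lambda(t)\lesssim 1$, ``$u(t)$ has frequency support $\lesssim\lambda(t)$'' and hence lies in $L^{\infty}_{t}H^{1}_{x}$. Almost periodicity $(\ref{2.8})$ gives only frequency \emph{concentration}: for every $\eta>0$ there is $C(\eta)$ with $\|P_{\geq C(\eta)\lambda(t)}(e^{-ix\cdot\xi(t)}u(t))\|_{L^{2}}\leq\eta$, with tails that a priori carry unbounded $\dot{H}^{1}$ norm. The uniform regularity is a deep consequence of the hypothesis $\int\lambda^{3}\,dt<\infty$ (not merely of boundedness of $\lambda$), proved via the long-time Strichartz/double-Duhamel machinery of \cite{dodson2012global}, \cite{dodson2016global2}; this is exactly the paper's estimate $(\ref{4.9})$, $\|u\|_{L^{\infty}_{t}\dot{H}^{s}}\lesssim_{s}C_{0}^{s}$ for $0\leq s<1+\tfrac{4}{d}$, and the citation of \cite{tao2008minimal} does not supply it. Moreover, even granting $L^{\infty}_{t}H^{1}$, your deduction that $\lambda(t_{k})\to 0$ implies $\|\nabla u(t_{k})\|_{L^{2}}\to 0$ does not follow: splitting in frequency at a threshold $N$, the piece above $N$ is only bounded by the $\dot{H}^{1}$ norm and does not vanish. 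One genuinely needs uniform control in $\dot{H}^{s}$ for some $s>1$ (available from $(\ref{4.9})$ since $1+\tfrac{4}{d}>1$) so that $\|\,|\nabla| P_{\geq N}u\|_{L^{2}}\lesssim N^{1-s}C_{0}^{s}\to 0$ as $N\to\infty$; only then does interpolation with almost periodicity yield $E(u(t))\to 0$. Without this input the contradiction is not reached, so you should either import $(\ref{4.9})$ explicitly or supply an independent proof of the additional regularity.
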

\begin{proof}
 Suppose that this were not true, that is, there exists a constant $C_{0} < \infty$ and a sequence $T_{n} \rightarrow \sup(I)$ such that for all $n \in \mathbb{Z}_{\geq 0}$,
\begin{equation}\label{4.3}
\frac{1}{\sup_{t \in [0, T_{n}]} \lambda(t)} \int_{0}^{T_{n}} \lambda(t)^{3} dt \leq C_{0}.
\end{equation}
This would correspond to the rapid cascade scenario in \cite{dodson2015global}, \cite{dodson2016global}, \cite{fan20182}. In those papers $N(t)$ was used instead of $\lambda(t)$. As in those papers, $\lambda(t)$ can be chosen to be continuous, so for each $T_{n}$ choose $t_{n} \in [0, T_{n}]$ such that
\begin{equation}\label{4.4}
\lambda(t_{n}) = \sup_{t \in [0, T_{n}]} \lambda(t).
\end{equation}

Since $I$ is the maximal interval of existence of $u$,
\begin{equation}\label{4.5}
\lim_{n \rightarrow \infty} \| u \|_{L_{t,x}^{\frac{2(d + 2)}{d}}([0, T_{n}] \times \mathbb{R}^{d})} = \infty.
\end{equation}
By the almost periodicity property of $u$ and $(\ref{2.8})$, there exist $x(t_{n})$, $\xi(t_{n})$, and $\gamma(t_{n})$ such that if
\begin{equation}\label{4.7}
e^{i \gamma(t_{n})} \lambda(t_{n})^{d/2} e^{ix \cdot \xi(t_{n})} e^{i \gamma(t_{n})} u(t_{n}, \lambda(t_{n}) x + x(t_{n})) = v_{n}(x),
\end{equation}
then $v_{n}$ converges to some $u_{0}$ in $L^{2}(\mathbb{R}^{d})$, where $u_{0}$ is the initial data for a solution $u$ to $(\ref{1.1})$ that blows up in both time directions, $\lambda(t) \leq 1$ for all $t \leq 0$, and
\begin{equation}\label{4.8}
\int_{-\infty}^{0} \lambda(t)^{3} dt \leq C_{0}.
\end{equation}
Following the proof in \cite{dodson2012global} in dimensions $d \geq 3$ and \cite{dodson2016global2} in dimension $d = 2$,
\begin{equation}\label{4.9}
\| u \|_{L_{t}^{\infty} \dot{H}^{s}((-\infty, 0] \times \mathbb{R}^{d})} \lesssim_{s} C_{0}^{s},
\end{equation}
for any $0 \leq s < 1 + \frac{4}{d}$. Combining $(\ref{4.9})$ with $(\ref{4.8})$ and $|\lambda'(t)| \lesssim \lambda(t)^{3}$ implies
\begin{equation}\label{4.10}
\lim_{t \searrow -\infty} \lambda(t) = 0.
\end{equation}
Also, since
\begin{equation}\label{4.11}
|\xi'(t)| \lesssim \lambda(t)^{3},
\end{equation}
Equation $(\ref{4.8})$ implies that $\xi(t)$ converges to some $\xi_{-} \in \mathbb{R}^{d}$ as $t \searrow -\infty$. Make a Galilean transformation so that $\xi_{-} = 0$. Then, by interpolation, $(\ref{4.9})$ and $(\ref{4.10})$ imply
\begin{equation}\label{4.12}
\lim_{t \searrow -\infty} E(u(t)) = 0.
\end{equation}

Therefore, by conservation of energy, and convergence in $L^{2}$ of $(\ref{4.7})$,
\begin{equation}\label{4.13}
E(u_{0}) = 0, \qquad \text{and} \qquad \| u_{0} \|_{L^{2}} = \| Q \|_{L^{2}}.
\end{equation}
Therefore, by the Gagliardo-Nirenberg theorem,
\begin{equation}\label{4.14}
u_{0} = \lambda^{d/2} Q(\lambda(x - x_{0})), \qquad 0 < \lambda < \infty, \qquad x_{0} \in \mathbb{R}^{d},
\end{equation}
and $Q$ is the solution to the elliptic partial differential equation
\begin{equation}\label{4.15}
\Delta Q + |Q|^{\frac{4}{d}} Q = Q.
\end{equation}
However, assuming without loss of generality that $x_{0} = 0$ and $\lambda = 1$, the solution to $(\ref{1.1})$ is given by
\begin{equation}\label{4.16}
u(t,x) = e^{it} Q(x), \qquad t \in \mathbb{R}.
\end{equation}
However, such a solution definitely does not satisfy $(\ref{4.3})$, which gives a contradiction.

In the case that $\| u_{0} \|_{L^{2}} > \| Q \|_{L^{2}}$, Theorem $\ref{t5.2}$ implies that such a solution must blow up in finite time in both time directions, which contradicts $(\ref{4.8})$.
\end{proof}

Therefore, consider the case when
\begin{equation}\label{4.17}
\lim_{n \rightarrow \infty} \frac{1}{\sup_{t \in [0, T_{n}]} \lambda(t)} \int_{0}^{T_{n}} \lambda(t)^{3} dt = \infty.
\end{equation}
Passing to a subsequence, suppose
\begin{equation}\label{4.18}
\frac{1}{\sup_{t \in [0, T_{n}]} \lambda(t)} \int_{0}^{T_{n}} \lambda(t)^{3} dt = 2^{2n}.
\end{equation}

Then as in \cite{dodson2015global}, replace $M(t)$ in the previous section with,
\begin{equation}\label{4.19}
M(t) = \int \int |Iu(t, y)|^{2} Im[\bar{Iu} \nabla Iu](t,x) \cdot \tilde{\lambda}(t) (x - y) \psi(\tilde{\lambda}(t) (x - y)) dx dy,
\end{equation}
where $\tilde{\lambda}(t)$ is given by the smoothing algorithm from \cite{dodson2015global}. Then
\begin{equation}\label{4.20}
\aligned
\frac{d}{dt} M(t) = -2 \tilde{\lambda}(t) \int \int Im[\bar{Iu} \partial_{k} Iu](t,y) Im[\bar{Iu} \partial_{j} Iu](t,x) \\ \times [\delta_{jk} \psi(\tilde{\lambda}(t)(x - y)) + \frac{(x - y)_{j} (x - y)_{k}}{|x - y|} \psi'(\tilde{\lambda}(t)(x - y))] dx dy \\
+ \frac{1}{2} \tilde{\lambda}(t)^{3} \int \int |Iu(t,y)|^{2} |Iu(t,y)|^{2} [\Delta \phi(\tilde{\lambda}(t) (x - y)) + (d - 1) \Delta \psi(\tilde{\lambda}(t) (x - y))]dx dy \\
+ 2 \tilde{\lambda}(t) \int \int |Iu(t,y)|^{2} Re[\partial_{k} \bar{Iu} \partial_{j} Iu](t,x) [\delta_{jk} \psi(\tilde{\lambda}(t) (x - y)) + \frac{(x - y)_{j} (x - y)_{k}}{|x - y|} \psi'(\tilde{\lambda}(t) (x - y))] dx dy \\
- \frac{2}{d + 2} \tilde{\lambda}(t) \int \int |Iu(t,y)|^{2} |Iu(t,x)|^{2 + \frac{4}{d}} [d \psi(\tilde{\lambda}(t)(x - y)) + \psi'(\tilde{\lambda}(t) (x - y)) |x - y|]  dx dy + \mathcal E \\
+ \dot{\tilde{\lambda}}(t) \int \int |Iu(t, y)|^{2} Im[\bar{Iu} \nabla Iu](t,x) \cdot \phi(\tilde{\lambda}(t) (x - y)) (x - y) dx dy,
\endaligned
\end{equation}
where $I = P_{\leq 2^{2n} \cdot \sup_{t \in [0, T]} \lambda(t)}$.

Equations $(\ref{3.6})$ and $(\ref{6.10})$ imply
\begin{equation}\label{4.21}
\sup_{t \in [0, T_{n}]} |M(t)| \lesssim R o(2^{2n}) \cdot \sup_{t \in [0, T]} \lambda(t).
\end{equation}
Next, since the smoothing algorithm guarantees that $\tilde{\lambda}(t) \leq \lambda(t)$, following $(\ref{3.8})$ and $(\ref{6.14})$,
\begin{equation}\label{4.22}
\aligned
\int_{0}^{T_{n}} \frac{1}{2} \tilde{\lambda}(t)^{3} \int \int |Iu(t,y)|^{2} |Iu(t,y)|^{2}  [\Delta \phi(\tilde{\lambda}(t) (x - y)) + (d - 1) \Delta \psi(\tilde{\lambda}(t) (x - y))] dx dy dt \\ \lesssim \frac{1}{R^{2}} \| u \|_{L^{2}}^{4} \cdot \int_{0}^{T_{n}} \tilde{\lambda}(t) \lambda(t)^{2} dt \lesssim \frac{2^{2n}}{R^{2}} \cdot \sup_{t \in [0, T]} \lambda(t).
\endaligned
\end{equation}
Since $\tilde{\lambda}(t) \leq \lambda(t)$, following the analysis in $(\ref{3.9})$--$(\ref{3.25})$ in two dimensions and $(\ref{6.15})$--$(\ref{6.40})$ in higher dimensions,
\begin{equation}\label{4.27}
\aligned
2 \int_{0}^{T_{n}} \tilde{\lambda}(t) \int \int Im[I\bar{u} \partial_{k} Iu] Im[\bar{Iu} \partial_{j} Iu]  [\delta_{jk} \psi(\tilde{\lambda}(t)(x - y)) + \frac{(x - y)_{j} (x - y)_{k}}{|x - y|} \psi'(\tilde{\lambda}(t)(x - y))]  dx dy dt \\
+ 2 \int_{0}^{T_{n}} \tilde{\lambda}(t) \int \int |Iu(t,y)|^{2} Re[\partial_{j} \bar{Iu} \partial_{k} Iu](t,x)   [\delta_{jk} \psi(\tilde{\lambda}(t)(x - y)) + \frac{(x - y)_{j} (x - y)_{k}}{|x - y|} \psi'(\tilde{\lambda}(t)(x - y))]  dx dy dt \\
- \frac{2}{d + 2} \int_{0}^{T_{n}} \tilde{\lambda}(t) \int \int |u(t,y)|^{2} |u(t,x)|^{2 + \frac{4}{d}}  [d \psi(\tilde{\lambda}(t)(x - y)) + \psi'(\tilde{\lambda}(t) (x - y)) |x - y|] dx dy dt
\endaligned
\end{equation}

\begin{equation}\label{4.28}
\aligned
= 4 \int_{0}^{T} \frac{\tilde{\lambda}(t) \lambda(t)^{2}}{R} \int (\int \chi^{2}(\frac{y - s}{R}) |v_{s,t}(t, y)|^{2} dy) \chi^{2}(\frac{x_{\ast} - s}{R}) E(\chi^{2}(\frac{x - x(t)}{R}) v_{s,t}(t,x)) ds dt \\ + R o(2^{2n}) \cdot \sup_{t \in [0, T]} \lambda(t) + O(\eta^{\sigma} \| u \|_{L_{t}^{\infty} L_{x}^{2}}^{2} \int_{0}^{T_{n}} \tilde{\lambda}(t) \| u(t) \|_{L^{2 + \frac{4}{d}}}^{2 + \frac{4}{d}} dt) + O(\frac{C(\eta)}{R} \| u \|_{L_{t}^{\infty} L_{x}^{2}}^{2} \int_{0}^{T_{n}} \tilde{\lambda}(t) \| u(t) \|_{L_{x}^{2 + \frac{4}{d}}}^{2 + \frac{4}{d}} dt).
\endaligned
\end{equation}

\begin{remark}
 The term $v_{s, t}$ is an abbreviation for
\begin{equation}\label{4.29}
v_{s, t} = \frac{e^{i x \xi(s)}}{\lambda(t)^{d/2}} Iu(t, \frac{x}{\lambda(t)}),
\end{equation}
where $\xi(s) \in \mathbb{R}^{d}$ is chosen such that
\begin{equation}\label{4.30}
\int \chi^{2}(\frac{\tilde{\lambda}(t) (x - s)}{R \lambda(t)}) Im[\bar{v}_{s, t} \nabla (v_{s,t})] dx = 0.
\end{equation}
\end{remark}

The error estimates can be handled in a manner similar to the previous section, see \cite{dodson2015global}. Therefore, it only remains to consider the contribution of the term in $(\ref{4.20})$ with $\dot{\tilde{\lambda}}(t)$. By direct computation,
\begin{equation}\label{4.31}
\aligned
\dot{\tilde{\lambda}}(t) \int \int |Iu(t, y)|^{2} Im[\bar{Iu} \nabla Iu](t,x) \cdot \phi(\tilde{\lambda}(t) (x - y)) (x - y) dx dy  \\
= \frac{\dot{\tilde{\lambda}}(t)}{R^{d} \tilde{\lambda}(t)} \int (\int \chi^{2}(\frac{\tilde{\lambda}(t) y - s}{R}) |Iu(t,y)|^{2} dy)(\int \chi^{2}(\frac{\tilde{\lambda}(t) x - s}{R}) Im[\bar{Iu} \nabla Iu](t,x) \cdot (x \tilde{\lambda}(t) - s) dx) ds \\
- \frac{\dot{\tilde{\lambda}}(t)}{R^{d} \tilde{\lambda}(t)} \int (\int \chi^{2}(\frac{\tilde{\lambda}(t) y - s}{R}) (y \tilde{\lambda}(t) - s) |Iu(t,y)|^{2} dy) \cdot (\int \chi^{2}(\frac{\tilde{\lambda}(t) x - s}{R}) Im[\bar{Iu} \nabla Iu] dx) ds.
\endaligned
\end{equation}
Now rescale,
\begin{equation}\label{4.32}
\aligned
= \frac{\dot{\tilde{\lambda}}(t)}{R^{d} \tilde{\lambda}(t)} \lambda(t) \int (\int \chi^{2}(\frac{\tilde{\lambda}(t) y - \lambda(t) s}{R \lambda(t)}) |\frac{1}{\lambda(t)^{d/2}} Iu(t,\frac{y}{\lambda(t)})|^{2} dy) \\
\times (\int \chi^{2}(\frac{\tilde{\lambda}(t) x - \lambda(t) s}{R \lambda(t)}) Im[\frac{1}{\lambda(t)^{d/2}} \bar{Iu}(t, \frac{x}{\lambda(t)}) \nabla (\frac{1}{\lambda(t)^{d/2}} Iu(t, \frac{x}{\lambda(t)})] \cdot (\frac{x \tilde{\lambda}(t) - s \lambda(t)}{\lambda(t)}) dx) ds \\
- \frac{\dot{\tilde{\lambda}}(t)}{R^{d} \tilde{\lambda}(t)} \lambda(t) \int (\int \chi^{2}(\frac{\tilde{\lambda}(t) y - \lambda(t) s}{R \lambda(t)}) (\frac{y \tilde{\lambda}(t) - s \lambda(t)}{\lambda(t)}) |\frac{1}{\lambda(t)^{d/2}} Iu(t,\frac{y}{\lambda(t)})|^{2} dy) \\ 
\cdot (\int \chi^{2}(\frac{\tilde{\lambda}(t) x - s \lambda(t)}{R \lambda(t)}) Im[\frac{1}{\lambda(t)^{d/2}} \bar{Iu}(t, \frac{x}{\lambda(t)}) \nabla (\frac{1}{\lambda(t)^{d/2}} Iu(t, \frac{x}{\lambda(t)}))] dx) ds.
\endaligned
\end{equation}
\begin{remark}
Throughout these calculations, we understand that $\lambda^{-d/2} Iu(\frac{x}{\lambda})$ refers to the rescaling of the function $Iu(x)$, not the $I$-operator acting on a rescaling of $u$.
\end{remark}

For any $\xi \in \mathbb{R}^{d}$,
\begin{equation}\label{4.33}
\aligned
= \frac{\dot{\tilde{\lambda}}(t)}{R^{d} \tilde{\lambda}(t)} \lambda(t) \int (\int \chi^{2}(\frac{\tilde{\lambda}(t) y - \lambda(t) s}{R \lambda(t)}) |\frac{e^{ix \cdot \xi}}{\lambda(t)^{d/2}} Iu(t,\frac{y}{\lambda(t)})|^{2} dy) \\
\times (\int \chi^{2}(\frac{\tilde{\lambda}(t) x - \lambda(t) s}{R \lambda(t)}) Im[\frac{e^{-ix \cdot \xi}}{\lambda(t)^{d/2}} \bar{Iu}(t, \frac{x}{\lambda(t)}) \nabla (\frac{e^{ix \cdot \xi}}{\lambda(t)^{d/2}} Iu(t, \frac{x}{\lambda(t)})] (\frac{x \tilde{\lambda}(t) - s \lambda(t)}{\lambda(t)}) dx) ds \\
- \frac{\dot{\tilde{\lambda}}(t)}{R^{d} \tilde{\lambda}(t)} \lambda(t) \int (\int \chi^{2}(\frac{\tilde{\lambda}(t) y - \lambda(t) s}{R \lambda(t)}) (\frac{y \tilde{\lambda}(t) - s \lambda(t)}{\lambda(t)}) |\frac{e^{ix \cdot \xi}}{\lambda(t)^{d/2}} Iu(t,\frac{y}{\lambda(t)})|^{2} dy) \\ 
\times (\int \chi^{2}(\frac{\tilde{\lambda}(t) x - s \lambda(t)}{R \lambda(t)}) Im[\frac{e^{-ix \cdot \xi}}{\lambda(t)^{d/2}} \bar{Iu}(t, \frac{x}{\lambda(t)}) \nabla(\frac{e^{ix \cdot \xi}}{\lambda(t)^{d/2}} Iu(t, \frac{x}{\lambda(t)}))] dx) ds.
\endaligned
\end{equation}
In particular, if we choose $\xi = \xi(s)$,
\begin{equation}\label{4.34}
\aligned
= \frac{\dot{\tilde{\lambda}}(t)}{R^{d} \tilde{\lambda}(t)} \lambda(t) \int (\int \chi^{2}(\frac{\tilde{\lambda}(t) y - \lambda(t) s}{R \lambda(t)}) |v_{s,t}|^{2} dy) \\ \times (\int \chi^{2}(\frac{\tilde{\lambda}(t) x - \lambda(t) s}{R \lambda(t)}) Im[\bar{v}_{s,t}(t, \frac{x}{\lambda(t)}) \nabla (v_{s,t})] \cdot (\frac{x \tilde{\lambda}(t) - s \lambda(t)}{\lambda(t)}) dx) ds \\
= \frac{\dot{\tilde{\lambda}}(t)}{R} \int (\int \chi^{2}(\frac{\tilde{\lambda}(t) (y - s)}{R \lambda(t)}) |v_{s,t}|^{2} dy)  (\int \chi^{2}(\frac{\tilde{\lambda}(t) (x - s)}{R \lambda(t)}) Im[\bar{v}_{s,t}(t, \frac{x}{\lambda(t)}) \nabla (v_{s,t})] \cdot (\frac{\tilde{\lambda}(t)(x - s)}{\lambda(t)}) dx) ds.
\endaligned
\end{equation}
Then by the Cauchy-Schwarz inequality,
\begin{equation}\label{4.35}
\aligned
\lesssim  \frac{\eta^{4}}{R^{d}} \lambda(t) \tilde{\lambda}(t)^{2} \int  (\int \chi^{2}(\frac{\tilde{\lambda}(t) (y - s)}{R \lambda(t)}) |v_{s,t}|^{2} dy) (\int \chi^{2}(\frac{\tilde{\lambda}(t) (x - s)}{R \lambda(t)}) |\partial_{x}(v_{s,t})|^{2} dx) ds \\
+ \frac{1}{\eta^{4}} \frac{|\dot{\tilde{\lambda}}(t)|^{2}}{\lambda(t) \tilde{\lambda}(t)^{2}} \frac{\tilde{\lambda}(t)}{R^{d} \lambda(t)} \int (\int \chi^{2}(\frac{\tilde{\lambda}(t) (y - s)}{R \lambda(t)}) |v_{s,t}|^{2} dy) \int \chi^{2}(\frac{\tilde{\lambda}(t) (x - s)}{R \lambda(t)}) |v_{s,t}|^{2} (\frac{\tilde{\lambda}(t)(x - s)}{\lambda(t)})^{2} dx) ds.
\endaligned
\end{equation}

The first term in $(\ref{4.35})$ can be absorbed into $(\ref{4.28})$. The second term in $(\ref{4.35})$ is bounded by
\begin{equation}\label{4.36}
\frac{1}{\eta^{4}} \frac{|\dot{\tilde{\lambda}}(t)|^{2}}{\lambda(t) \tilde{\lambda}(t)^{2}} R^{2} \| u \|_{L_{t}^{\infty} L_{x}^{2}}^{4}.
\end{equation}
The smoothing algorithm from \cite{dodson2015global} is used to control this term. Recall that after $n$ iterations of the smoothing algorithm on an interval $[0, T]$, $\tilde{\lambda}(t)$ has the following properties:

\begin{enumerate}
\item $\tilde{\lambda}(t) \leq \lambda(t)$,

\item If $\dot{\tilde{\lambda}}(t) \neq 0$, then $\lambda(t) = \tilde{\lambda}(t)$,

\item $\tilde{\lambda}(t) \geq 2^{-n} \lambda(t)$,
 
\item $\int_{0}^{T} |\dot{\tilde{\lambda}}(t)| dt \leq \frac{1}{n} \int_{0}^{T} |\dot{\lambda}(t)| \frac{\tilde{\lambda}(t)}{\lambda(t)} dt$, with implicit constant independent of $n$ and $T$.
\end{enumerate}

Therefore,
\begin{equation}\label{4.37}
\int_{0}^{T_{n}} \frac{1}{\eta^{4}} \frac{|\dot{\tilde{\lambda}}(t)|^{2}}{\lambda(t) \tilde{\lambda}(t)^{2}} R^{2} \| u \|_{L_{t}^{\infty} L_{x}^{2}}^{4} dt \leq \frac{1}{\eta^{4}} \| u \|_{L_{t}^{\infty} L_{x}^{2}}^{4} \int_{0}^{T_{n}} \frac{|\dot{\lambda}(t)|}{\lambda(t)^{3}} R^{2} |\dot{\tilde{\lambda}}(t)| dt \lesssim \frac{1}{n} \frac{R^{2}}{\eta^{4}} \| u \|_{L_{t}^{\infty} L_{x}^{2}}^{4} \int_{0}^{T_{n}} \tilde{\lambda}(t)  \lambda(t)^{2} dt.
\end{equation}

Since $\sup_{t \in [0, T_{n}]} \lambda(t) \leq 2^{-2n} \int_{0}^{T_{n}} \lambda(t)^{3} dt$,
\begin{equation}\label{4.39}
R_{n} \sup_{t \in [0, T_{n}]} |M(t)| \lesssim R_{n} o(2^{2n}) \cdot \sup_{t \in [0, T]} \lambda(t).
\end{equation}
Therefore, it is possible to take a sequence $\eta_{n} \searrow 0$, $R_{n} \nearrow \infty$, probably very slowly, such that
\begin{equation}\label{4.38}
\frac{1}{n} \frac{R_{n}^{2}}{\eta^{4}} \| u \|_{L_{t}^{\infty} L_{x}^{2}}^{4} \int_{0}^{T_{n}} |\tilde{\lambda}(t)| \lambda(t)^{2} dt = o_{n}(1) \int_{0}^{T_{n}} \tilde{\lambda}(t) \lambda(t)^{2} dt,
\end{equation}
\begin{equation}\label{4.40}
R_{n} \sup_{t \in [0, T_{n}]} |M(t)| \lesssim o(2^{2n}) \cdot \sup_{t \in [0, T]} \lambda(t),
\end{equation}
\begin{equation}\label{4.40.1}
O(\eta_{n}^{4} \| u \|_{L_{t}^{\infty} L_{x}^{2}}^{2} \int_{0}^{T} \tilde{\lambda}(t) \| u(t) \|_{L^{2 + \frac{4}{d}}}^{2 + \frac{4}{d}} dt) \lesssim o_{n}(1) \int_{0}^{T_{n}} \tilde{\lambda}(t) \lambda(t)^{2} dt,
\end{equation}
and
\begin{equation}\label{4.40.2}
O(\frac{C(\eta_{n})}{R_{n}} \| u \|_{L_{t}^{\infty} L_{x}^{2}}^{2} \int_{0}^{T_{n}} \tilde{\lambda}(t) \| u(t) \|_{L_{x}^{2 + \frac{4}{d}}}^{2 + \frac{4}{d}} dt) \lesssim o_{n}(1) \int_{0}^{T_{n}} \tilde{\lambda}(t) \lambda(t)^{2} dt.
\end{equation}

Therefore, these terms may be safely treated as error terms, and repeating the analysis in sections three and four for $(\ref{4.28})$, there exists a sequence of times $t_{n} \nearrow \sup(I)$ such that

\begin{equation}
E(\chi(\frac{(x - x(t_{n})) \tilde{\lambda}(t_{n})}{R_{n} \lambda(t_{n})}) v_{s_{n}, t_{n}}) \rightarrow 0,
\end{equation}
\begin{equation}
\| (1 - \chi(\frac{(x - x(t_{n})) \tilde{\lambda}(t_{n})}{R_{n} \lambda(t_{n})})) v_{s_{n}, t_{n}} \|_{L^{2}} \rightarrow 0,
\end{equation}
\begin{equation}
\| v_{s_{n}, t_{n}} \|_{L^{2}} \nearrow \| Q \|_{L^{2}},
\end{equation}
and
\begin{equation}\label{3.32}
\| \chi(\frac{(x - x(t_{n})) \tilde{\lambda}(t_{n})}{R_{n} \lambda(t_{n})}) Iv_{s_{n}, t_{n}} \|_{L^{2 + \frac{4}{d}}} \sim 1.
\end{equation}

In this case as well, we can show that this sequence converges in $H^{1}$ to
\begin{equation}\label{3.36}
u_{0} = \lambda^{d/2} Q(\lambda(x - x_{0})).
\end{equation}
This proves Theorem $\ref{t2.2}$ for a general $\lambda(t)$. $\Box$

\section{Proof of Theorem $\ref{t1.3}$:} 
The proof of Theorem $\ref{t1.3}$ uses the argument used in the proof of Theorem $\ref{t1.2}$, combined with some reductions from \cite{fan20182}. First recall Lemma $4.2$ from \cite{fan20182}.
\begin{lemma}\label{l5.1}
Let $u$ be a solution to $(\ref{1.1})$ that satisfies the assumptions of Theorem $\ref{t1.3}$. Then there exists a sequence $t_{n} \nearrow T^{+}(u)$ such that $u(t_{n})$ admits a profile decomposition with profiles $\{ \phi_{j}, \{ x_{j,n}, \lambda_{j,n}, \xi_{j,n}, t_{j,n}, \gamma_{j,n} \} \}$, and there is a unique profile, call it $\phi_{1}$, such that
\begin{enumerate}
\item $\| \phi_{1} \|_{L^{2}} \geq \| Q \|_{L^{2}}$,

\item The nonlinear profile $\Phi_{1}$ associated to $\phi_{1}$ is an almost periodic solution in the sense of $(\ref{2.8})$ that does not scatter forward or backward in time.
\end{enumerate}
\end{lemma}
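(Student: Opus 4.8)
The plan is to run the concentration--compactness extraction familiar from minimal-mass blow-up theory, adapted to the near-ground-state mass constraint $(\ref{1.18})$. Since $u$ fails to scatter forward in time, $\| u \|_{L_{t,x}^{\frac{2(d+2)}{d}}((t_n, T^{+}(u)) \times \mathbb{R}^{d})} = \infty$ for every $t_n < T^{+}(u)$. I would fix a sequence $t_n \nearrow T^{+}(u)$; by conservation of mass $\| u(t_n) \|_{L^{2}} = \| u_{0} \|_{L^{2}} \le \| Q \|_{L^{2}} + \alpha$, so $\{ u(t_n) \}$ is bounded in $L^{2}$ and, after passing to a subsequence, admits a linear profile decomposition: for each $J$,
\[
u(t_n) = \sum_{j=1}^{J} g_n^{j}\big[ e^{i t_{j,n} \Delta} \phi_{j} \big] + w_n^{J},
\]
with $g_n^{j}$ of the form $(\ref{2.1.1})$, the parameters $\{ x_{j,n}, \lambda_{j,n}, \xi_{j,n}, t_{j,n}, \gamma_{j,n} \}$ asymptotically orthogonal, the Pythagorean expansion $\sum_{j} \| \phi_{j} \|_{L^{2}}^{2} + \limsup_{n} \| w_n^{J} \|_{L^{2}}^{2} \le (\| Q \|_{L^{2}} + \alpha)^{2}$ valid for every $J$, and $\lim_{J \to \infty} \limsup_{n} \| e^{it\Delta} w_n^{J} \|_{L_{t,x}^{\frac{2(d+2)}{d}}} = 0$. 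After a finite time translation I may assume each $t_{j,n}$ is identically $0$ or tends to $\pm\infty$.

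Next I would isolate the large profile. Let $\Phi_{j}$ be the nonlinear profile of $\phi_{j}$ --- the solution with data $\phi_{j}$ if $t_{j,n}\equiv 0$, or the solution scattering to $e^{it\Delta}\phi_{j}$ as $t\to\pm\infty$. If all profiles satisfied $\| \phi_{j} \|_{L^{2}} < \| Q \|_{L^{2}}$, then by the sub-threshold global well-posedness and scattering theorem of \cite{dodson2015global} each $\Phi_{j}$ would be global with $\| \Phi_{j} \|_{L_{t,x}^{\frac{2(d+2)}{d}}}$ controlled by $\| \phi_{j} \|_{L^{2}}$; feeding this, together with the spacetime smallness of $w_n^{J}$, into the nonlinear superposition and long-time perturbation lemma (as in \cite{tao2008minimal}) would give a uniform bound on $\| u \|_{L_{t,x}^{\frac{2(d+2)}{d}}([t_n, T^{+}(u)) \times \mathbb{R}^{d})}$ for large $n$, contradicting non-scattering. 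Hence some profile, which I relabel $\phi_{1}$, has $\| \phi_{1} \|_{L^{2}} \ge \| Q \|_{L^{2}}$. Because $2\| Q \|_{L^{2}}^{2} > (\| Q \|_{L^{2}} + \alpha)^{2}$ for $\alpha$ small, the Pythagorean bound forces $\phi_{1}$ to be the unique such profile and $\sum_{j \ge 2} \| \phi_{j} \|_{L^{2}}^{2} + \limsup_{J}\limsup_{n} \| w_n^{J} \|_{L^{2}}^{2} \le 2\alpha\|Q\|_{L^{2}} + \alpha^{2}$, so for $\alpha$ small every $j \ge 2$ profile has mass below the small-data threshold and $\Phi_{j}$ scatters in both directions.

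Then I would establish the non-scattering and almost periodicity of $\Phi_{1}$. If $t_{1,n}\to+\infty$, or if $\Phi_{1}$ scattered forward in time, the forward nonlinear evolution of the first profile would have finite $L_{t,x}^{\frac{2(d+2)}{d}}$ norm, and the perturbation argument above would again force $u$ to scatter forward --- impossible; so $\Phi_{1}$ does not scatter forward. For the backward direction I would argue as in \cite{fan20182}: if $\Phi_{1}$ scattered backward, repeating the extraction at times tending to the backward endpoint of the lifespan of $\Phi_{1}$ (or, when $t_{1,n}\to-\infty$, using that $\Phi_{1}(t_n)$ is then asymptotically free) yields a sub-threshold or asymptotically linear component that scatters, which forces $\Phi_{1}$ to scatter forward --- again impossible. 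Thus $\Phi_{1}$ scatters in neither direction and $\|Q\|_{L^{2}} \le \| \Phi_{1}(0) \|_{L^{2}} \le \|Q\|_{L^{2}}+\alpha$. For almost periodicity in the sense of $(\ref{2.8})$, I would argue by contradiction: if some symmetry orbit $\{ g^{-1}\Phi_{1}(\tau_m) \}$ were non-precompact in $L^{2}$, the profile-decomposition dichotomy applied to $\{ \Phi_{1}(\tau_m) \}$ would produce either a second profile of mass $\ge \|Q\|_{L^{2}}$ (impossible by Pythagoras, since $\|\Phi_{1}(0)\|_{L^{2}} \le \|Q\|_{L^{2}}+\alpha$) or a scattering sub-threshold/asymptotically linear piece, forcing $\Phi_{1}$ to scatter in one direction; either outcome contradicts the previous step, so the decomposition must degenerate to a single convergent profile and $\{\Phi_{1}(\tau_m)\}$ subconverges after symmetries.

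The hard part is upgrading ``$u$ does not scatter forward'' to ``$\Phi_{1}$ does not scatter in \emph{either} direction'': the hypotheses of Theorem $\ref{t1.3}$ say nothing about the backward evolution of $u$, so this needs a second-generation profile decomposition and careful tracking of the time parameters $t_{j,n}$ --- precisely the delicate part supplied by \cite{fan20182}. A secondary difficulty is making the long-time perturbation estimate uniform on the \emph{unbounded} interval $[t_n, T^{+}(u))$, including the possibility that blow-up occurs in infinite rather than finite time.
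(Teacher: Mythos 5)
The paper does not prove this statement at all: it is quoted verbatim as Lemma~4.2 of \cite{fan20182} (see also Lemma~\ref{l2.3}), so there is no in-paper proof to compare against. Judged on its own merits, your sketch has the right architecture --- linear profile decomposition at $t_{n}\nearrow T^{+}(u)$, the Pythagorean expansion forcing at most one profile of mass $\geq \|Q\|_{L^{2}}$ when $2\|Q\|_{L^{2}}^{2}>(\|Q\|_{L^{2}}+\alpha)^{2}$, sub-threshold scattering from \cite{dodson2015global} for the remaining profiles, and long-time perturbation theory to transfer forward non-scattering of $u$ to $\Phi_{1}$ --- but the two steps you yourself flag as hard are exactly where the sketch breaks down.

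First, your argument for backward non-scattering of $\Phi_{1}$ is wrong as stated. You claim that if $\Phi_{1}$ scattered backward, a further extraction would ``force $\Phi_{1}$ to scatter forward.'' That implication is false: a solution can perfectly well scatter backward and blow up forward (the pseudoconformal image of $e^{it}Q$ does exactly this), so nothing about the backward behavior of a component can force forward scattering. The correct mechanism is the one the paper uses in the minimal-mass reduction at $(\ref{2.1.3})$--$(\ref{2.2})$: because $u$ fails to scatter \emph{forward} and $t_{n}\nearrow T^{+}(u)$, monotone convergence gives $\| u \|_{L_{t,x}^{\frac{2(d+2)}{d}}((T^{-}(u), t_{n}]\times\mathbb{R}^{d})}\rightarrow\infty$. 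The backward-in-time evolution of the decomposition from $t_{n}$ must reproduce this diverging spacetime norm; since every $j\geq 2$ profile and the remainder scatter, the divergence must be carried by the backward evolution of $\Phi_{1}$, which therefore cannot scatter backward. This also disposes of the cases $t_{1,n}\rightarrow\pm\infty$, since an asymptotically free first profile would scatter in the corresponding direction and contradict the divergence on that side.

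Second, the almost-periodicity step is a genuine gap, not merely a deferred detail. Your dichotomy (``either a second large profile or a scattering piece'') does not yield precompactness of the symmetry orbit: without mass minimality, a nontrivial decomposition of $\Phi_{1}(\tau_{m})$ with a single large profile can still leave up to $2\alpha\|Q\|_{L^{2}}+\alpha^{2}$ of mass in small profiles and in the remainder, and smallness of the remainder's \emph{scattering} norm does not give smallness of its $L^{2}$ norm, which is what $(\ref{2.8})$ requires. Closing this requires the iterative extraction and mass-quantization argument that constitutes the actual content of Lemma~4.2 of \cite{fan20182}; as written, your proposal asserts the conclusion of that argument rather than supplying it.
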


Now consider the nonlinear profile $\Phi_{1}$. To simplify notation relabel $\Phi_{1} = u$, and let $v_{s, t}$ be as in $(\ref{4.29})$. Using the same arguments as in the proof of Theorem $\ref{t1.2}$, there exists a sequence $t_{n} \nearrow T^{+}(u)$, $R_{n} \nearrow \infty$, $s_{n} \in \mathbb{R}^{d}$, $\tilde{\lambda}(t) \leq \lambda(t)$, such that
\begin{equation}\label{5.1}
E(\chi(\frac{(x - x(t_{n})) \tilde{\lambda}(t_{n})}{R_{n} \lambda(t_{n})}) v_{s_{n}, t_{n}}) \rightarrow 0,
\end{equation}
\begin{equation}\label{5.2}
\| (1 - \chi(\frac{(x - x(t_{n})) \tilde{\lambda}(t_{n})}{R_{n} \lambda(t_{n})})) v_{s_{n}, t_{n}} \|_{L^{2}} \rightarrow 0,
\end{equation}
\begin{equation}
\| v_{s_{n}, t_{n}} \|_{L^{2}} \nearrow \| u \|_{L^{2}},
\end{equation}
and
\begin{equation}\label{5.3}
\| \chi(\frac{(x - x(t_{n})) \tilde{\lambda}(t_{n})}{R_{n} \lambda(t_{n})}) I v_{s_{n}, t_{n}} \|_{L^{2 + \frac{4}{d}}} \sim 1.
\end{equation}
Therefore, by the almost periodicity of $v$, there exists a sequence $g(t_{n})$ given by $(\ref{2.1.1})$ such that
\begin{equation}\label{5.4}
g(t_{n}) v(t_{n}) \rightarrow u_{0}, \qquad \text{in} \qquad L^{2},
\end{equation}
where $E(u_{0}) = 0$ and $\| u_{0} \|_{L^{2}} \geq \| Q \|_{L^{2}}$.\medskip

Next, utilize a blowup result of \cite{merle2003sharp}, \cite{merle2004universality}, \cite{merle2005blow}, \cite{merle2006sharp}. We will state it here as it is stated in Theorem $3$ of \cite{merle2006sharp}. See also Theorem $3.1$ of \cite{fan20182}.
\begin{theorem}\label{t5.2}
Assume $u$ is a solution to $(\ref{1.1})$ with $H^{1}$ initial data, non-positive energy, and satisfies $(\ref{1.18})$. If $u$ is of zero energy, then $u$ blows up in finite time according to the log-log law,
\begin{equation}\label{5.5}
u(t,x) = \frac{1}{\lambda(t)^{d/2}} (Q + \epsilon)(\frac{x - x(t)}{\lambda(t)}) e^{i \gamma(t)}, \qquad x(t) \in \mathbb{R}^{d}, \qquad \gamma(t) \in \mathbb{R}, \lambda(t) > 0, \qquad \| \epsilon \|_{H^{1}} \leq \delta(\alpha),
\end{equation}
with the estimate
\begin{equation}\label{5.6}
\lambda(t) \sim \sqrt{\frac{T - t}{\ln|\ln(T - t)|}},
\end{equation}
and
\begin{equation}\label{5.7}
\lim_{t \rightarrow T} \int (|\nabla \epsilon(t,x)|^{2} + |\epsilon(t,x)|^{2} e^{-|x|}) dx = 0.
\end{equation}
\end{theorem}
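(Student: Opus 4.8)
The plan is to carry out the modulation analysis of Merle and Rapha\"el \cite{merle2003sharp,merle2004universality,merle2005blow,merle2006sharp}. Since $\|u_{0}\|_{L^{2}}$ lies within $\alpha$ of $\|Q\|_{L^{2}}$ and $E(u_{0})\leq 0$, the variational characterization of $Q$ together with conservation of mass and energy traps $u(t)$, after modulating out the symmetries of $(\ref{1.1})$, inside a small $H^{1}$ tube around the ground state manifold, with a scale $\lambda(t)$ that must go to $0$ as $t\to\sup(I)$. First I would introduce the renormalized time $s$ with $ds/dt=\lambda(t)^{-2}$ and write
\[
u(t,x)=\frac{1}{\lambda(t)^{d/2}}\,\big(Q_{b(t)}+\epsilon\big)\!\left(s,\frac{x-x(t)}{\lambda(t)}\right)e^{i\gamma(t)},
\]
where $Q_{b}$ is a suitably localized approximate self-similar profile with $Q_{0}=Q$, solving $\Delta Q_{b}-Q_{b}+ib\big(\tfrac{d}{2}Q_{b}+y\cdot\nabla Q_{b}\big)+|Q_{b}|^{4/d}Q_{b}=0$ up to a small, controlled error, and $\epsilon$ is subject to a fixed finite set of orthogonality conditions that uniquely determine the modulation parameters $(\lambda,x,\gamma,b)$. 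Substituting into $(\ref{1.1})$ and projecting onto these directions produces the modulation system, schematically
\[
\Big|\tfrac{\lambda_{s}}{\lambda}+b\Big|+\Big|\tfrac{x_{s}}{\lambda}\Big|+|\gamma_{s}-1|+|b_{s}+c\,\Gamma_{b}|\ \lesssim\ \delta(\epsilon)+(\text{lower order in }b),
\]
where $\Gamma_{b}\sim e^{-\pi/b}$ quantifies the resonant interaction between the exponential tail of $Q$ and the slowly decaying outgoing radiation and $\delta(\epsilon)$ is an exponentially weighted local norm of $\epsilon$.

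The heart of the proof is a bootstrap argument on the maximal interval on which this decomposition holds, closed by two deep ingredients. The first is the \emph{spectral property}: the linearized energy $\langle L\epsilon,\epsilon\rangle$, with $L=(L_{+},L_{-})$, $L_{+}=-\Delta+1-(1+\tfrac{4}{d})Q^{4/d}$, $L_{-}=-\Delta+1-Q^{4/d}$, is coercive modulo the finitely many directions removed by the orthogonality conditions together with the virial direction $y\cdot\nabla Q$. The second is the construction of a \emph{Lyapunov functional} $\mathcal{J}(s)$, a sharply localized and $b$-corrected energy/virial quantity, which is shown to be essentially monotone nonincreasing along the renormalized flow; combined with the refined energy estimate this forces $b(s)>0$ with $b$ decreasing for $s$ large, drives the weighted local norm of $\epsilon$ to zero, and prevents $\lambda$ from ever increasing again. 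The hypothesis $E(u_{0})=0$ is used precisely to fix the sign and the size of the leading terms entering $\mathcal{J}$ and the $b_{s}$ law, so that the log-log regime — as opposed to an exactly self-similar one — is the one actually realized.

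Granting the bootstrap, I would extract the rate from the reduced system. Integrating $b_{s}\approx -c\,\Gamma_{b}$ gives $b(s)\sim\tfrac{\pi}{\ln s}$; feeding this into $\lambda_{s}/\lambda\approx -b$ gives $-\ln\lambda(s)\sim\tfrac{\pi s}{\ln s}$, so $\lambda(s)\to 0$, and the blowup is in \emph{finite} time because $T-t=\int_{s}^{\infty}\lambda(s')^{2}\,ds'<\infty$. Inverting the resulting relation $T-t\approx\tfrac{\ln s}{2\pi}\,\lambda(s)^{2}$ together with $\ln|\ln\lambda|\sim\ln s\sim\ln|\ln(T-t)|$ yields exactly the log-log law $(\ref{5.6})$, while $(\ref{5.5})$ with $\|\epsilon\|_{H^{1}}\leq\delta(\alpha)$ is the trapping in the tube. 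Finally $(\ref{5.7})$ follows because the monotone, bounded-below functional $\mathcal{J}$ has an integrable ``dissipation'' controlling $\int\!\big(|\nabla\epsilon|^{2}+|\epsilon|^{2}e^{-|y|}\big)\,ds$, which must therefore tend to $0$ along a sequence of times and, using the zero-energy identity to rule out oscillation, as a full limit.

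I expect the main obstacle to be the spectral property together with the accompanying sharp energy/virial estimate with the correct constant multiplying $e^{-\pi/b}$: every downstream conclusion — the sign of $b$, the monotonicity of $\mathcal{J}$, and the precise $\sqrt{(T-t)/\ln|\ln(T-t)|}$ rate — rests on it, and in the dimensions where it is not accessible by an elementary computation it must be established through a rigorous computer-assisted verification, which is the single most delicate point of the scheme.
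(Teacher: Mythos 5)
The paper does not actually prove this statement: it is quoted verbatim as Theorem 3 of Merle--Rapha\"el \cite{merle2006sharp} (see also Theorem 3.1 of \cite{fan20182}), so there is no in-paper argument to compare against. Your outline is a faithful summary of the proof in the cited references --- modulation around the localized self-similar profiles $Q_{b}$, the orthogonality conditions and modulation equations with the $e^{-\pi/b}$ flux term, the spectral property, the localized virial/Lyapunov monotonicity, and the integration of the reduced system to extract the $\sqrt{(T-t)/\ln|\ln(T-t)|}$ rate --- and you correctly flag the spectral property as the delicate point; as a blind reconstruction of the source's strategy it is accurate, though it is of course a roadmap deferring to those papers rather than a self-contained proof.
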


Let $u$ be the solution to $(\ref{1.1})$ with initial data $u_{0}$. If $\| u_{0} \|_{L^{2}} = \| Q \|_{L^{2}}$ then we are done, using the analysis in the previous section. If $\| u_{0} \|_{L^{2}} > \| Q \|_{L^{2}}$, then Theorem $\ref{t5.2}$ implies that $u$ must be of the form $(\ref{5.5})$. Furthermore, by perturbative arguments, for any fixed $t' \in \mathbb{R}$, $(\ref{5.4})$ implies that there exists a sequence $g(t_{n}, t')$ such that
\begin{equation}\label{5.8}
g(t_{n}, t') v(t_{n} + \frac{t'}{\lambda(t_{n})^{2}}) \rightarrow u(t'), \qquad \text{in} \qquad L^{2}.
\end{equation}
In fact, perturbative arguments also imply that there exists a sequence $t_{n}' \nearrow \infty$, perhaps very slowly, such that
\begin{equation}\label{5.9}
\| g(t_{n}, t_{n}') v(t_{n} + \frac{t_{n}'}{\lambda(t_{n})^{2}}) - u(t_{n}') \|_{L^{2}} \rightarrow 0.
\end{equation}
Furthermore, Theorem $\ref{t5.2}$ implies that there exists a sequence $g(t_{n}')$ such that
\begin{equation}\label{5.10}
g(t_{n}') u(t_{n}') \rightharpoonup Q, \qquad \text{weakly in} \qquad L^{2}.
\end{equation}
Combining $(\ref{5.9})$ and $(\ref{5.10})$,
\begin{equation}\label{5.11}
g(t_{n}') g(t_{n}, t_{n}') v(t_{n} + \frac{t_{n}'}{\lambda(t_{n})^{2}}) \rightharpoonup Q, \qquad \text{weakly in} \qquad L^{2}.
\end{equation}
This completes the proof of Theorem $\ref{t1.3}$.\medskip

Acknowledgements: During the writing of this paper, the author was supported by NSF grant DMS - 1764358.

\bibliography{biblio}
\bibliographystyle{plain}

\end{document}